\newtheorem{theorem}{Theorem}[section]
\newtheorem{definition}{Definition}[section]
\theoremstyle{remark}
\newtheorem{remark}{Remark}[section]
\newcommand{\be}{\begin{eqnarray}}
\newcommand{\ee}{\end{eqnarray}}
\newcommand{\func}[1]{\operatorname{#1}}
\numberwithin{equation}{section}
\begin{document}
\title{Renewal processes linked to fractional relaxation equations with
variable order }
\keywords{Fractional relaxation equation, renewal processes, Scarpi
derivative, General fractional calculus, Sonine pair}
\date{\today }

\begin{abstract}
We introduce and study here a renewal process defined by means of
a time-fractional relaxation equation with derivative order
$\alpha(t)$ varying with time $t\geq0$. In particular, we use the
operator introduced by Scarpi in the Seventies (see \cite{SCA})
and later reformulated in the regularized Caputo sense in
\cite{GAR}, inside the framework of the so-called general
fractional calculus. The model obtained extends the well-known
time-fractional Poisson process of fixed order $\alpha \in (0,1)$
and tries to overcome its limitation consisting in the constancy
of the derivative order (and therefore of the memory degree of the
interarrival times) with respect to time. The variable order
renewal process is proved to fall outside the usual subordinated
representation, since it can not be simply defined as a Poisson
process with random time (as happens in the standard fractional
case). Finally a related continuous-time random walk model is
analysed and its limiting behavior established.
\end{abstract}

\author{}
\author{Luisa Beghin$^1$}
\author{Lorenzo Cristofaro$^2$}
\address{${}^1$ Department of Statistical Sciences, Sapienza, University of
Rome. P.le Aldo Moro, 5, Rome, Italy}
\email{luisa.beghin@uniroma1.it}
\address{${}^2$ Department of Statistical Sciences, Sapienza, University of
Rome. P.le Aldo Moro, 5, Rome, Italy}
\email{lorenzo.cristofaro@uniroma1.it}
\author{Roberto Garrappa$^3$}
\address{${}^3$ Department of Mathematics, University of Bari "Aldo Moro",
via Edoardo Orabona 4, Bari, Italy}
\email{roberto.garrappa@uniba.it}
\maketitle

\section{Introduction}

The Poisson process and, in general, the renewal processes are extensively
studied and applied in many different fields, ranging from physics to
finance and actuarial sciences. In particular, their fractional extensions
have been proved to be useful since they are characterized by
non-exponentially distributed intervals between subsequent renewal times. It
is indeed well-known that the time-fractional Poisson process (of order $%
\alpha \in (0,1]$) is a renewal process with interarrival times
following a Mittag-Leffler distribution (with parameter $\alpha $)
(see, for example, \cite{BEG}, \cite{MAI}, \cite{MEE}). The latter
entails a withdrawal from the memoryless property, which is
greater the further away $\alpha $ is from $1$. Although this
model is much more flexible, and adaptable to real data, than the
standard one, there is still a rigidity since the derivative order
(and therefore the memory degree of the intertimes) is constantly
equal to a fixed value $\alpha $ over time.

We introduce and study here a renewal process defined by means of
a time-fractional relaxation equation with order $\alpha (t)$
varying with time $t>0$. The class of suitable functions $\alpha (\cdot )$ is
characterized and some explanatory examples of choices are given; in particular, $\alpha (\cdot )$ can be
modelled to represent two
different variable-order processes: a transition from an initial order $%
\alpha _{1}$ to a second order $\alpha _{2}$ (to be achieved as $%
t\rightarrow +\infty $); a transition from an initial order $\alpha _{1}$ to
a second order $\alpha _{2}$ (to be achieved at a finite time $T$) with a
return the initial value $\alpha _{1}$ as $t\rightarrow +\infty $.
These models can be compared with the renewal processes defined by means of distributed order derivatives (see \cite{BEG2} and \cite{KAT}),
under the assumption of a discrete uniform distribution for the random order $\alpha$ (i.e., taking values $\alpha _{1}$ and $\alpha _{2}$), even if, in our case, the transition
between the two values is depending on the time.

Although different approaches are available in the literature to define
variable-order fractional derivatives, in this work we focus on the operator
introduced by Scarpi in the Seventies (see \cite{SCA}) and later
reformulated in the regularized Caputo sense in \cite{GAR}. The main feature
of this approach is that it formulates a generalization of classic
constant-order operators in the Laplace domain, thus to facilitate the
construction of operators satisfying a Sonine condition.

This work is organized in the following way. In Section
\ref{S:Scarpi} we introduce the variable-order generalization of
the fractional derivative (according to the mentioned approach
introduced by Scarpi) and we recall some basic facts about
time-fractional Poisson processes of constant order. In Section
\ref{S:VO_Renewal} we consider the variable-order fractional
relaxation equation and formulate the basic assumptions needed to
guarantee that its solution is a proper tail distribution for the
interarrival times of a renewal process. In Section \ref{renewal
process} the renewal process defined by means of the previous
results is hence studied and some features, such as the factorial
moments and the autocovariance, are obtained in the Laplace
domain; some graphical representations are provided thanks
to numerical inversion of the corresponding Laplace transformations. Section %
\ref{S:CT_RandomWalk} is devoted to the study of the continuous-time random
walk with counting process represented by the variable-order fractional
renewal and we study its asymptotic behavior, under an appropriate rescaling
and under some assumptions on the jumps distribution.

%The paper is organized as follows: in Section 2 we present the definition of the variable order fractional renewal process, together with the necessary preliminary results. Moreover, some features of the process, such as the factorial moments and the autocovariance are obtained (at least in the Laplace domain).  Based on the previous results we consider, in Section 3, the continuous-time random walk with counting process represented by the variable order fractional renewal and we study its asymptotic behavior, under an appropriate rescaling and under some assumptions on the jumps distribution.

\section{Preliminaries}

\label{S:Scarpi}

A variable-order fractional derivative can be provided by means of the
following definition (we refer to \cite{GAR} for a more in-depth treatment).

\begin{definition}
\label{def} Let $\alpha :[0,T]\rightarrow (0,1),$ $T\in \mathbb{R}^{+}$, be
a locally integrable function with Laplace transform $A(s):=\int_{0}^{+%
\infty }e^{-st}\alpha (t)dt$ and let $\phi _{A}(t),$ $t\in \lbrack 0,T],$ be
the inverse Laplace transform of $\widetilde{\phi }_{A}(s):=s^{sA(s)-1},$
for $s>0$. For $f\in AC[0,T]$ the (Caputo-type) fractional derivative with
variable order $\alpha (t)$ is defined as%
\begin{equation}
D_{t}^{\alpha (t)}f(t):=\int_{0}^{t}\phi _{A}(t-\tau )f^{\prime
}(\tau )d\tau ,\qquad t\in \lbrack 0,T].  \label{dt}
\end{equation}
\end{definition}

It is easy to check that, for $\alpha (t)=\alpha $ for any $t,$ the operator
$D_{t}^{\alpha (t)}$ coincides with the standard Caputo fractional
derivative of order $\alpha $, since, in this case, $A(s)=\alpha /s$ and $%
\widetilde{\phi }_{A}(s)=s^{\alpha -1}.$ Therefore the kernel is $\phi
_{\alpha }(t)=t^{-\alpha }/\Gamma (1-\alpha )$ and (\ref{dt}) reduces to%
\begin{equation*}
^{C}D_{t}^{\alpha }f(t):=\frac{1}{\Gamma (1-\alpha )}\int_{0}^{t}(t-\tau
)^{-\alpha }f^{\prime }(\tau )d\tau ,\qquad t\in \lbrack 0,T],\;\alpha \in
(0,1).
\end{equation*}

We recall that the Laplace transform (hereafter LT) of $D_{t}^{\alpha (t)}$ is equal to%
\begin{equation}
\mathcal{L}\{D_{t}^{\alpha (t)}u;s\}=s^{sA(s)}\widetilde{u}%
(s)-s^{sA(s)-1}u(0),\qquad s>0,  \label{dt2}
\end{equation}%
where $\mathcal{L}\{u;s\}:=\widetilde{u}(s)=\int_{0}^{+\infty }e^{-sz}u(z)dz$
(see \cite{GAR}).\newline
The operator (\ref{dt}) was analyzed in the framework of the so-called
General Fractional Calculus (see \cite{KOC}, \cite{KOC2}, \cite{KOC3}, \cite%
{LUC}): in particular, it was proved in \cite{GAR} that $D_{t}^{\alpha (t)}$
is invertible under the following assumption%
\begin{equation*}
\lim_{s\rightarrow +\infty }sA(s)=\overline{\alpha }\in (0,1),
\end{equation*}%
which is verified if
\begin{equation}
\lim_{t\rightarrow 0^{+}}\alpha (t)=\overline{\alpha }\in (0,1).  \label{dt3}
\end{equation}%
Then we will assume hereafter that the condition in (\ref{dt3}) is verified;
indeed this is enough to ensure the existence of a real function $\phi
_{A}(\cdot )$ as inverse transform of $\widetilde{\phi }_{A}(s)$.

Moreover, let us denote by $\psi _{A }(\cdot )$ the Sonine pair of $%
\phi _{A }(\cdot )$, i.e. the function such that $\widetilde{\psi }%
_{A}(s)=1/s\widetilde{\phi }_{A}(s)$. Then the inverse operator of $D_{t}^{\alpha (t)}$ is well
defined as%
\begin{equation}
I_{t}^{\alpha (t)}f(t):=\int_{0}^{t}\psi _{A}(t-\tau )f(\tau
)d\tau ,\qquad t\in \lbrack 0,T],  \label{int}
\end{equation}%
for $\psi _{A }(t):=\mathcal{L}^{-1}\{s^{-sA(s)};t\}$, since, thanks to
condition (\ref{dt3}), also the function $\psi _{A }$ is real. It was
proved in \cite{GAR} that the integral in (\ref{int}) enjoys both the
semigroup and symmetry properties and that $\left\{ D_{t}^{\alpha
(t)},I_{t}^{\alpha (t)}\right\} $ satisfies the fundamental theorem of
fractional calculus, i.e. the following holds%
\begin{equation*}
D_{t}^{\alpha (t)}I_{t}^{\alpha (t)}f(t)=f(t),\qquad I_{t}^{\alpha
(t)}D_{t}^{\alpha (t)}f(t)=f(t)-f(0),\qquad t\in \lbrack 0,T].
\end{equation*}%
Finally, the results in \cite{GAR} are obtained for kernels $\widetilde{\phi
}_{A}(\cdot )$ satisfying the following conditions
\begin{subequations}
\begin{eqnarray}
\widetilde{\phi }_{A}(s) &\rightarrow &0,\qquad s\widetilde{\phi }%
_{A}(s)\rightarrow +\infty ,\qquad s\rightarrow +\infty  \label{dt4} \\
\widetilde{\phi }_{A}(s) &\rightarrow &+\infty ,\qquad s\widetilde{\phi }%
_{A}(s)\rightarrow 0,\qquad s\rightarrow 0,  \label{dt6}
\end{eqnarray}%
which are necessary to include Definition \ref{def} in the framework of the
so-called general fractional calculus (see \cite{KOC}, for details).

It seems to be difficult to find examples of functions $\widetilde{\phi }%
_{A}(s)$ (in addition to the limiting case $s^{\alpha -1})$ satisfying (\ref%
{dt4})-(\ref{dt6}) and such that their inverse transforms are
Stieltjes. These three assumptions would be sufficient to ensure
that the solution to the following relaxation equation with
fractional variable order
\end{subequations}
\begin{equation}
D_{t}^{\alpha (t)}u(t)=-\lambda u(t),\qquad u(0)=1,  \label{dt5}
\end{equation}%
is completely monotone (CM), as happens in the (constant-order) fractional
case.  We recall that a function $f:[0,+\infty )\rightarrow \lbrack 0,+\infty
)$ in $C^{\infty}$ is CM if $\ (-1)^{n}f^{(n)}(x)\geq 0$, for any $x\geq 0,$ $n\in \mathbb{N}%
$ (where $f^{(n)}(x):=d^{n}/dx^{n}f(x)$).  However, we do not need the complete monotonicity of the solution to (\ref%
{dt5}) and we will explore below the consequences of its lack to our analysis.

We recall that when $\alpha (t)=\alpha $, for any $t\geq 0,$ the solution to%
\begin{equation}
D_{t}^{\alpha }u(t)=-\lambda u(t),\qquad u(0)=1.  \label{co}
\end{equation}
coincides with $u_{\alpha }(t)=E_{\alpha }(-\lambda t^{\alpha }),$ where $%
E_{\alpha }(x):=\sum_{j=0}^{\infty }x^{j}/\Gamma (\alpha j+1)$ is the
one-parameter Mittag-Leffler function.

The so-called time-fractional Poisson
process $N_{\alpha }:=\left\{ N_{\alpha }(t)\right\} _{t\geq 0}$ can be
defined as a renewal process with interarrival times $Z_{\alpha ,j}$, $%
j=1,2,...,$ independent and identically distributed with $P(Z_{\alpha
}>t)=u_{\alpha }(t)$, $t\geq 0,$ i.e. $N_{\alpha }(t):=\sum_{k=1}^{\infty
}1_{T_{k}^{\alpha }\leq t}$, where $T_{k}^{\alpha }:=\sum_{j=1}^{k}Z_{\alpha
,j}$ (see, for example, \cite{MAI}, \cite{BEG}).

It has also been proved in \cite{MEE} that $N_{\alpha }$ is equal in
distribution to a standard Poisson process time-changed by the inverse of an
independent $\alpha $-stable subordinator (we will denote it as $L_{\alpha
}(t),$ $t\geq 0$, and its density function as $l_{\alpha }(x,t),$ $x,t\geq 0$%
). This result is a consequence of the complete monotonicity of the Mittag-Leffler function, and thus of the solution
to (\ref{co}), since, in this case, we have that%
\begin{equation}
u_{\alpha }(t)=\int_{0}^{+\infty }e^{-\lambda z}l_{\alpha }(z,t)dz
\label{ll}
\end{equation}%
(see \cite{GOR}%
). In other words, it follows since the LT of (\ref{ll}),
i.e. $\widetilde{u}_{\alpha }(s)=s^{\alpha -1}/(s^{\alpha }+\lambda )$, is a
Stieltjes function and thus it coincides with the iterated LT
of a spectral density.

Formula (\ref{ll}) shows that, for the fractional Poisson process $N_{\alpha
}$, the tail distribution function of the interarrival times $%
Z_{\alpha }$ satisfies the following relationship:%
\begin{equation}
P(Z_{\alpha }>t)=P(Z>L_{\alpha }(t)),  \label{ll2}
\end{equation}%
where $Z\sim Exp(\lambda )$ is the interarrival time of the standard Poisson
process $N:=\left\{ N(t)\right\} _{t\geq 0}$.\ From (\ref{ll2}), by
considering that
\begin{equation}
\{T_{k}^{\alpha }<t\}=\{N_{\alpha }(t)>k\},  \label{nt}
\end{equation}%
we have the following equality in the finite-dimensional distributions' sense%
\begin{equation}
N_{\alpha }(t)\overset{f.d.d.}{=}N(L_{\alpha }(t)),  \label{rt}
\end{equation}%
where $L_{\alpha }(t)$ is assumed to be independent of $N(t).$

As we will see below, in the variable order case considered here, a
subordinated representation of the process (analogue to (\ref{rt})) does not
hold, providing an interesting example where the usual correspondence
between time-fractional equations and random time processes does not apply.

\section{The variable-order fractional relaxation equation}

\label{S:VO_Renewal}

Let us consider the solution to the fractional relaxation equation with
variable order derivative (\ref{dt5}). By taking into account (\ref{dt2}),
it is easy to see that its LT reads%
\begin{equation}
\widetilde{u}_{A}(s)=\frac{s^{sA(s)-1}}{\lambda +s^{sA(s)}},\qquad s>0.
\label{uu}
\end{equation}%
In view of what follows, we prove that, under appropriate conditions on $%
\alpha (\cdot )$, the function (\ref{uu}) can be expressed as the
Laplace transform of a tail distribution function, i.e. its
inverse can be written as $u_{A}(t)=P(Z_{A}>t)$, for a positive
r.v. $Z_{A}$.

We recall that a function $g:(0,+\infty)\rightarrow \mathbb{R}$ is
Bernstein if it is $C^{\infty}$, $g(x)\geq 0$, for any $x$, and
$(-1)^{n-1}g^{n}(x) \geq0,$ for any $n \in \mathbb{N},$ $x>0$ (see
\cite{SCH}, p.21).

\begin{theorem}
\label{thm1} Let $\alpha :[0,T]\rightarrow (0,1),$ $T\in \mathbb{R}^{+}$, be
such that the following conditions hold%
\begin{equation}
\lim_{t\rightarrow 0^{+}}\alpha (t)=\alpha ^{\prime },\qquad
\lim_{t\rightarrow +\infty }\alpha (t)=\alpha ^{\prime \prime },  \label{uu2}
\end{equation}%
for $\alpha ^{\prime },\alpha ^{\prime \prime }\in (0,1),$ and that, for its
LT $A(s)$ the function $s^{sA(s)},$ $s>0$, is Bernstein. Then
the solution $u_{A}(t)$ to the relaxation equation (\ref{dt5}) is
non-negative, non-increasing, right-continuous and such that $%
\lim_{t\rightarrow 0^{+}}u_{A}(t)=1$.
\end{theorem}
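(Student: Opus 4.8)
The plan is to read off the properties of $u_A$ from its Laplace transform $\widetilde{u}_A(s) = s^{sA(s)-1}/(\lambda + s^{sA(s)})$ by invoking classical Bernstein–Hausdorff–Widder type theorems. Write $\Psi(s) := s^{sA(s)}$, so that $\widetilde{u}_A(s) = \frac{1}{s}\cdot\frac{\Psi(s)}{\lambda + \Psi(s)}$. The first step is to argue that $f(s) := \Psi(s)/(\lambda + \Psi(s))$ is a completely monotone function of $s$ on $(0,+\infty)$. Since $\Psi$ is assumed Bernstein and $\lambda > 0$, one can use the standard composition fact that if $\Psi$ is a nonnegative Bernstein function then $s \mapsto \Psi(s)/(\lambda + \Psi(s))$ is completely monotone: indeed $r \mapsto r/(\lambda + r) = 1 - \lambda/(\lambda+r)$ is completely monotone on $(0,\infty)$ as a function of $r$ (its derivatives alternate in sign), and the composition of a completely monotone function with a Bernstein function is completely monotone (see \cite{SCH}). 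Hence $f$ is CM.

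Next, having $f$ completely monotone and bounded (indeed $0 \le f(s) \le 1$), Bernstein's theorem gives a unique sub-probability measure $\mu$ on $[0,+\infty)$ with $f(s) = \int_0^\infty e^{-st}\mu(dt)$. Dividing by $s$ corresponds to integrating: $\widetilde{u}_A(s) = \int_0^\infty e^{-st} u_A(t)\,dt$ with $u_A(t) = \mu([0,t])$, which is automatically non-negative, non-decreasing in the wrong direction — so here I would instead note that $\widetilde{u}_A(s) = \frac{1}{s}f(s)$ is itself completely monotone (product of the two CM functions $1/s$ and $f(s)$), so $u_A$ is non-negative; and that $u_A$ is non-increasing follows because $s\widetilde{u}_A(s) - u_A(0^+) = f(s) - 1 = -\lambda/(\lambda+\Psi(s))$ is (minus) a completely monotone function, i.e. $\mathcal{L}\{u_A'(t); s\}$ extends to a CM-type object forcing $u_A' \le 0$; right-continuity and the existence of the monotone limit $u_A(0^+)$ come for free from monotonicity.

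The remaining point is the normalization $\lim_{t\to 0^+} u_A(t) = 1$, which I would get from the Tauberian/initial-value relation $\lim_{t\to 0^+} u_A(t) = \lim_{s\to\infty} s\widetilde{u}_A(s) = \lim_{s\to\infty} \Psi(s)/(\lambda+\Psi(s))$. This is where condition \eqref{uu2} enters: $\lim_{t\to 0^+}\alpha(t) = \alpha'$ forces $\lim_{s\to\infty} sA(s) = \alpha' \in (0,1)$ (this is exactly the Tauberian link already used for \eqref{dt3}), hence $\Psi(s) = s^{sA(s)} \to +\infty$ as $s\to\infty$, so the ratio tends to $1$. Symmetrically $\lim_{t\to+\infty}\alpha(t)=\alpha''$ gives $\Psi(s)\to 0$ as $s\to 0^+$, which (again by a Tauberian theorem) yields $u_A(+\infty) = 0$ and confirms $u_A$ is a genuine tail function — though the statement only asks for the behaviour at $0^+$.

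The main obstacle I anticipate is justifying the passage from "completely monotone Laplace transform" to the pointwise regularity statements (non-negativity, monotonicity, right-continuity) cleanly, and in particular handling the initial value rigorously: one must make sure the Karamata/Tauberian initial-value theorem applies, which requires knowing a priori that $u_A$ is monotone near $0$ (so that $s\widetilde{u}_A(s)$ has a limit iff $u_A(0^+)$ does) — hence the logical order above, establishing monotonicity first and only then reading off $u_A(0^+)=1$. A secondary technical nuisance is that $s^{sA(s)}$ being Bernstein is an assumption on a rather implicitly defined function, so I would double-check that it indeed implies $\widetilde\phi_A(s)=s^{sA(s)-1}$ has a real inverse (already granted by \eqref{dt3}) and that no measurability pathology prevents the inversion.
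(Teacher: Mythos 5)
Your overall skeleton --- factor $\widetilde{u}_A(s) = \frac{1}{s}\cdot\frac{\Psi(s)}{\lambda+\Psi(s)}$ with $\Psi(s)=s^{sA(s)}$, exploit the Bernstein assumption on $\Psi$, and finish with a Tauberian argument for $u_A(0^+)=1$ --- is the same as the paper's, but the first step contains a concrete error. The function $g(r)=r/(\lambda+r)=1-\lambda/(\lambda+r)$ is \emph{not} completely monotone: $g'(r)=\lambda(\lambda+r)^{-2}>0$, so the requirement $(-1)^1g'(r)\ge 0$ already fails; its derivatives do alternate in sign, but starting from a positive first derivative, which makes $g$ a \emph{Bernstein} function, not a CM one. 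Consequently $f(s)=g(\Psi(s))$ is a composition of two Bernstein functions, hence Bernstein rather than CM, and the two conclusions you draw from ``$f$ is CM'' --- the Bernstein--Widder representation of $f$ by a sub-probability measure, and ``$\widetilde{u}_A$ is a product of two CM functions'' --- do not follow as stated. The repair is exactly the paper's route: $s\widetilde{u}_A(s)=g(\Psi(s))$ is Bernstein, and a Bernstein function divided by $s$ is completely monotone (Corollary 3.8 in \cite{SCH}), which yields the non-negativity of the inverse transform.

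The second gap is the monotonicity/right-continuity step, which you flag as the main obstacle but do not close. The identity $\mathcal{L}\{u_A';s\}=s\widetilde u_A(s)-u_A(0^+)=-\lambda/(\lambda+\Psi(s))$ presupposes that $u_A$ is absolutely continuous with integrable derivative, which is precisely what is at stake; complete monotonicity of $\widetilde u_A$ only gives a non-negative \emph{measure} as inverse transform, not a non-increasing function. The paper closes this by invoking Williamson's Theorem 10 on multiply monotone functions: $\widetilde u_A$ is the Laplace transform of a non-increasing, right-continuous function provided $\widetilde u_A(s)\to 0$ as $s\to+\infty$, $\lim_{s\to 0^+}s\widetilde u_A(s)$ exists, and the first derivative of $s\widetilde u_A(s)$ is completely monotone and summable --- all of which follow from $s\widetilde u_A(s)$ being Bernstein together with the limits $sA(s)\to\alpha'$ as $s\to+\infty$ and $sA(s)\to\alpha''$ as $s\to 0^+$, obtained from \eqref{uu2} via the initial and final value theorems. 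Your Tauberian computation of $u_A(0^+)=1$ from $\widetilde u_A(s)\sim 1/s$ then agrees with the paper's final step.
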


\begin{proof}
It is easy to check that, if (\ref{uu2}) holds, the conditions (\ref{dt4})-(%
\ref{dt6}) are satisfied, by applying the initial and final value theorems,
respectively (see \cite{LEP}, p.373). Indeed, we have that%
\begin{equation}
\lim_{s\rightarrow +\infty }sA(s)=\alpha ^{\prime },\qquad
\lim_{s\rightarrow 0^{+}}sA(s)=\alpha ^{\prime \prime }  \label{con}
\end{equation}%
(where $\alpha ^{\prime }$ and $\alpha ^{^{\prime \prime }}$ can
coincide). Let now write $s\widetilde{u}_{A}(s)=g(f(s)),$ where $%
f(s):=s^{sA(s)}$ and $g(x):=x/(\lambda +x).$ It is easy to check that $%
g(\cdot )$ is a Bernstein function, so that, under the assumption on $%
s^{sA(s)}$, also $s\widetilde{u}_{A}(s)$ is Bernstein and $\widetilde{u}%
_{A}(s)$ is completely monotone (by applying Corollary 3.8 in \cite{SCH}).

As a consequence, by the Bernstein theorem, there exists a non-negative,
finite measure $\mu \left( \cdot \right) $ on $[0,+\infty )$ such that $%
\widetilde{u}_{A}(s)=\int_{0}^{+\infty }e^{-st}\mu (dt),$ for any $s.$

In order to prove that the inverse LT of $\widetilde{u}%
_{A}(s) $ is a non-increasing and right continuous function (i.e. monotone
of order $1$), we apply Theorem 10 in \cite{WIL}, p.29: it is enough to
check that $\lim_{s\rightarrow +\infty }\widetilde{u}_{A}(s)=0$, that the $%
\lim_{s\rightarrow 0^{+}}s\widetilde{u}_{A}(s)$ exists and that
the first derivative of $s\widetilde{u}_{A}(s)$ is CM and
summable. The latter holds since $s\widetilde{u}_{A}(s)$ is
Bernstein, while the limiting conditions are satisfied by
(\ref{con}). Thus $\widetilde{u}_{A}(s)$ is the Laplace transform
of a non-negative, non-increasing, right-continuous function,
which coincides with the solution to (\ref{dt5}). Finally, since $\widetilde{%
u}_{A}(s)\sim 1/s,$ for $s\rightarrow +\infty ,$ we can apply the Tauberian
theorem (see \cite{FEL}) in order to check that $\lim_{t\rightarrow
0^{+}}u_{A}(t)=1$.
\end{proof}

\ \bigskip

We now provide some explanatory examples of functions $\alpha (\cdot
)$ for which the previous result holds, in addition to the
constant-order case.
Obviously, when $\alpha (t)=\alpha \in (0,1)$, $\forall t$, we have that $%
s^{sA(s)}=s^{\alpha }$ is a Bernstein function and
\begin{equation*}
\widetilde{u}_{A}(s)=\frac{s^{sA(s)-1}}{\lambda +s^{sA(s)}}=\frac{s^{\alpha
-1}}{\lambda +s^{\alpha }}.
\end{equation*}%
Its inverse LT is the Mittag-Leffler function $u_{\alpha
}(t)=E_{\alpha }(-\lambda t^{\alpha }),$ which is completely monotone for $%
0<\alpha \leq 1$ (see \cite{GOR} and \cite{SCN}).

\subsection{Exponential transition from $\protect\alpha_1$ to $\protect\alpha%
_2$}

A special case is obtained by means of the function
\begin{equation*}
\alpha (t)=\alpha _{1}+(\alpha _{1}-\alpha _{2})\mathrm{e}^{-ct},\quad
\alpha _{1},\alpha _{2}\in (0,1),\quad c>0,
\end{equation*}%
describing the order transition from $\alpha _{1}$ to $\alpha _{2}$
according to an exponential law with rate $-c$ \cite{GAR}. It is immediate
to compute its LT, $A(s)$, and the corresponding function $s^{sA(s)}$, as
\begin{equation*}
A(s)=\frac{\alpha _{2}c+\alpha _{1}s}{s(c+s)},\quad s^{sA(s)}=s^{\frac{%
\alpha _{2}c+\alpha _{1}s}{c+s}}.
\end{equation*}

Finding all possible choices of parameters $\alpha _{1}$, $\alpha _{2}$ and $%
c$ in order to guarantee that $s^{sA(s)}$ is Bernstein remains an open
problem. Numerical inversion of the LT (according to the procedure outlined
in \cite{GAR}) allows however to observe the existence of some sets of
parameters for which the solution to the renewal equation (\ref{dt5})
displays the properties ensured by Theorem \ref{thm1}. Indeed, as we show in
Figure \ref{fig:Fig_RelEq_Exp_DerSol}, for the considered sets of
parameters, we obtain non-negative solutions of the relaxation equation
(left plot) which are also non-increasing, as one can argue by observing the
non-positive character of their first-order derivatives (right plot).

\begin{figure}[htb]
\centering
\begin{tabular}{c@{\hspace{1.0cm}}c}
\includegraphics[width=0.45\textwidth]{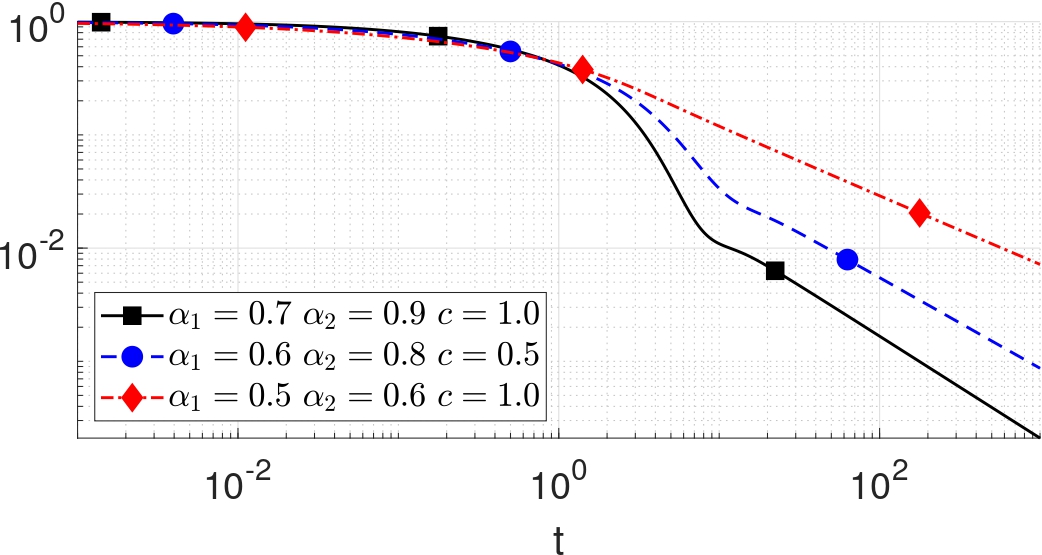} & %
\includegraphics[width=0.45\textwidth]{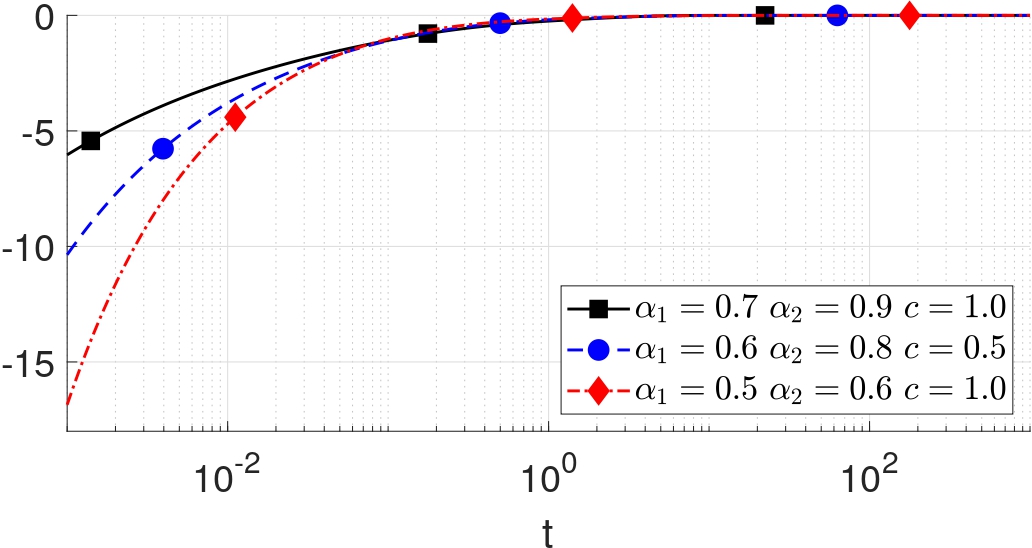}%
\end{tabular}%
\caption{Solution $u_A(t)$ (left plot), and its first-order derivative $%
u^{\prime}_A(t)$ (right plot), of the variable-order relaxation equation
with $\protect\alpha (t)=\protect\alpha _{1}+(\protect\alpha _{1}-\protect%
\alpha _{2})\mathrm{e}^{-ct}$ and different parameters $\protect\alpha_1$, $%
\protect\alpha_2$ and $c$.}
\label{fig:Fig_RelEq_Exp_DerSol}
\end{figure}

\subsection{Exponential transition with return}

A further transition, recently introduced in \cite{GAR3}, is obtained by
means of the function
\begin{equation}  \label{eq:TransitionPeak}
\alpha(t) = \alpha_1 + (\alpha_2-\alpha_1) \frac{\mathrm{e}^{-c_1 t} -
\mathrm{e}^{-c_2 t}}{F_c(c_2-c_1)} , \quad \alpha_1,\alpha_2 \in (0,1),
\quad c_1,c_2 > 0.
\end{equation}

Unlike the previous one, this function describes an order transition which
starts from $\alpha _{1}$, increases (or decreases) to $\alpha _{2}$ and
hence returns back to $\alpha _{1}$ as $t\rightarrow \infty $. Thus, in this case, the condition (\ref%
{uu2}) holds for $\alpha ^{\prime }=\alpha ^{\prime \prime }=\alpha _{1}.$
The constant $F_{c}$ is chosen so that $\alpha (t)$ has maximum or minimum
value $\alpha _{2}$, and hence it is given by
\begin{equation*}
F_{c}=\frac{1}{c_{2}-c_{1}}\left[ \Bigl(\frac{c_{1}}{c_{2}}\Bigr)^{\frac{%
c_{1}}{c_{2}-c_{1}}}-\Bigl(\frac{c_{1}}{c_{2}}\Bigr)^{\frac{c_{2}}{%
c_{2}-c_{1}}}\right] ,
\end{equation*}%
and $\alpha _{2}$ is achieved at time $t=(c_{2}-c_{1})^{-1}\log c_{2}/c_{1}$%
. Moreover, it is simple to evaluate
\begin{equation*}
A(s)=\frac{1}{s}\alpha _{1}+\frac{\alpha _{2}-\alpha _{1}}{%
F_{c}(s+c_{1})(s+c_{2})},\quad s^{sA(s)}=s^{\alpha _{1}}s^{\frac{s(\alpha
_{2}-\alpha _{1})}{F_{c}(s+c_{1})(s+c_{2})}}.
\end{equation*}

Also in this case a precise characterization of the whole set of
possible choices for $\alpha_1$, $\alpha_2$, $c_1$ and $c_2$ to
ensure that $s^{sA(s)} $ is Bernstein does not seem possible.
Again, numerical inversion of the LT is used to guarantee that
there exist some sets of parameters such that the solution to the
renewal equation (\ref{dt5}) has the properties required in
Theorem \ref{thm1}. From Figure \ref{fig:Fig_RelEq_Peak_DerSol} we
observe the non-negativity of these solutions (left plot) and its
non-increasing character expressed as non-positivity of the
corresponding first-order derivatives (right plot).

\begin{figure}[htb]
\centering
\begin{tabular}{c@{\hspace{1.0cm}}c}
\includegraphics[width=0.45\textwidth]{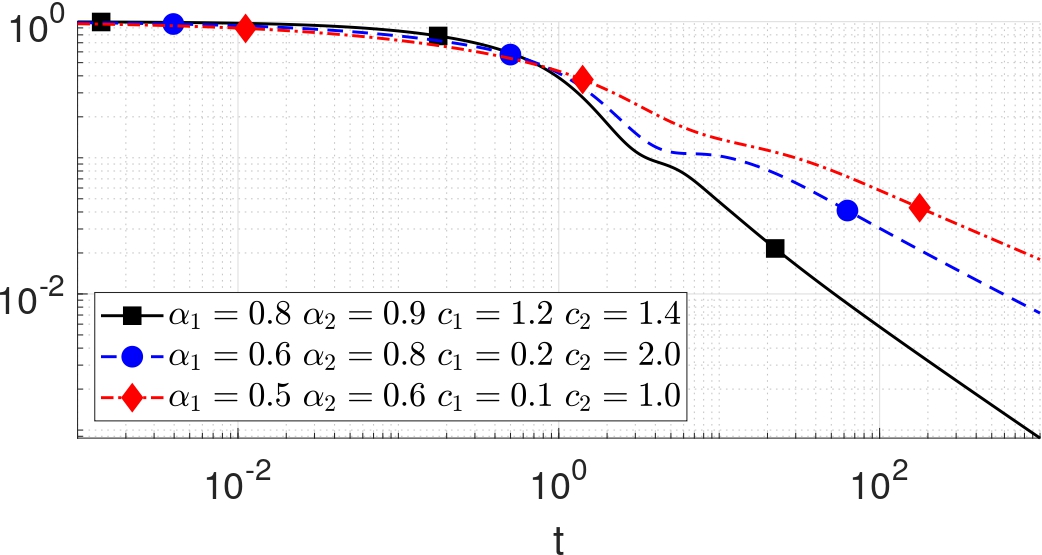} & %
\includegraphics[width=0.45\textwidth]{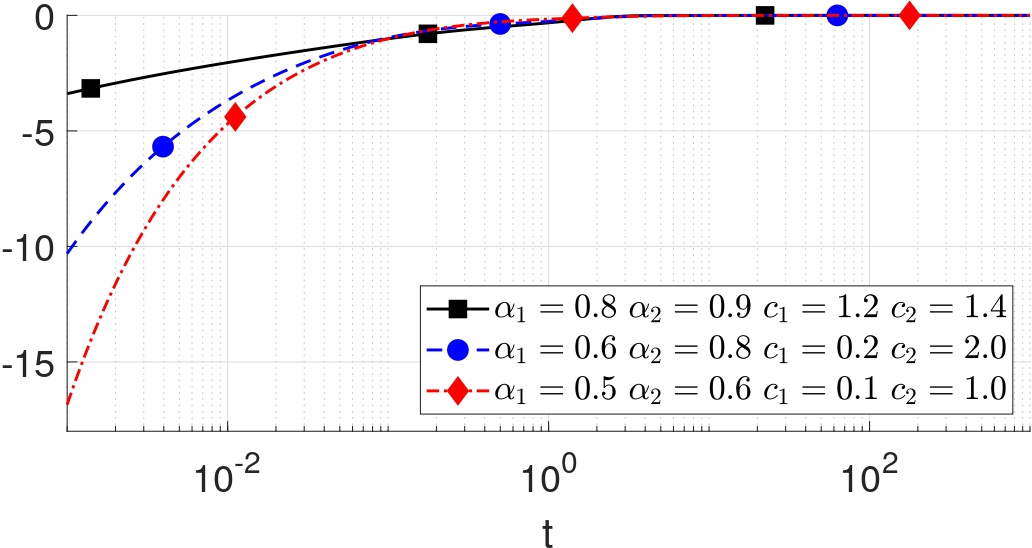}%
\end{tabular}%
\caption{Solution $u_A(t)$ (left plot), and its first-order derivative $%
u^{\prime}_A(t)$ (right plot), of the variable-order relaxation equation
with $\protect\alpha(t) = \protect\alpha_1 + (\protect\alpha_2-\protect\alpha%
_1) \frac{\mathrm{e}^{-c_1 t} - \mathrm{e}^{-c_2 t}}{F_c(c_2-c_1)}$ and
different parameters $\protect\alpha_1$, $\protect\alpha_2$, $c_1$ and $c_2$.
}
\label{fig:Fig_RelEq_Peak_DerSol}
\end{figure}

\section{The variable-order fractional renewal process}

\label{renewal process}

By resorting to the results obtained so far, we can define a renewal process
by assuming that its interarrival times have tail distribution function
equal to the solution of the relaxation equation (\ref{dt5}).

\begin{definition}
\label{def2}Let $N_{A}(t):=\left\{ N_{A}(t)\right\} _{t\geq 0}$ be a renewal process
with interarrival times $Z_{A,j}$, $j=1,2,...,$ independent and identically
distributed with $P(Z_{A}>t)=u_{A}(t)$, where $u_{A}(t)$, $t\geq 0,$
coincides with the solution of (\ref{dt5}).
\end{definition}

The density function of $Z_{A,j}$ can be written in Laplace domain as%
\begin{equation}
\widetilde{f}_{Z_{A}}(s\}=\frac{\lambda }{\lambda +s^{sA(s)}},  \label{ff}
\end{equation}%
while the LT of the $k$-th renewal time density reads%
\begin{equation}
\widetilde{f}_{T_{k}^{A}}(s)=\frac{\lambda ^{k}}{\left( \lambda
+s^{sA(s)}\right) ^{k}},\qquad k=1,2,...,  \label{tk}
\end{equation}%
where $T_{k}^{A}:=\sum_{j=1}^{k}Z_{A,j}.$ Thus the probability mass function
(in Laplace domain) of $N_{A}$ can be obtained as follows%
\begin{eqnarray}
\widetilde{p}_{k}^{A}(s) &:=& \mathcal{L}\left\{ P\left(
N_{A}(t)=k\right) ;s\right\} =\frac{\lambda ^{k}}{s\left( \lambda
+s^{sA(s)}\right) ^{k}}-\frac{\lambda ^{k+1}}{s\left( \lambda
+s^{sA(s)}\right) ^{k+1}}  \label{pk} \\
&=&\frac{\lambda ^{k}s^{sA(s)-1}}{\left( \lambda +s^{sA(s)}\right) ^{k+1}}%
,\qquad k=0,1,...,\;t\geq 0,  \notag
\end{eqnarray}%
and $p_{k}^{A}(t)$ satisfies the following Cauchy problem%
\begin{equation}
D_{t}^{\alpha (t)}p_{k}(t)=-\lambda (p_{k}(t)-p_{k-1}(t)),\qquad
p_{k}(0)=1_{\{0\}}(k),  \label{eq}
\end{equation}%
for $k=0,1,2,...$ and$\;t\geq 0.$

It is proved in \cite{GAR}, by some counterexamples, that, in the variable
order case, $\widetilde{\phi }_{A}(s)$ is not in general a Stieltjes
function; as a consequence, also the function (\ref{uu}) is not Stieltjes.
Thus, in our case, the solution of the relaxation equations $u_A (t)$ can
not be expressed as integral of the exponential tail distribution (as in (%
\ref{ll})) and a time-change representation (analogue to that given in (\ref%
{rt})) does not hold for the renewal process $N_{A}.$

We give in Figure \ref{fig:Fig_PMF_Exp} the probability mass function $p^{A}_k (t)$, for small values of $k$,
 in the first explanatory special case introduced above (i.e. for $\protect\alpha (t)=\protect\alpha _{1}+(%
\protect\alpha _{1}-\protect\alpha _{2})\mathrm{e}^{-ct}$). One can observe that, with the
exponential transition from $\alpha_1$ to $\alpha_2$, the variable-order
probability mass functions have a similar behavior to the corresponding
functions of order $\alpha_1$ for $t\to 0^+$ and of order $\alpha_2$ as $t
\to \infty$.

\begin{figure}[htb]
\centering
\begin{tabular}{c@{\hspace{1.0cm}}c}
\includegraphics[width=0.45\textwidth]{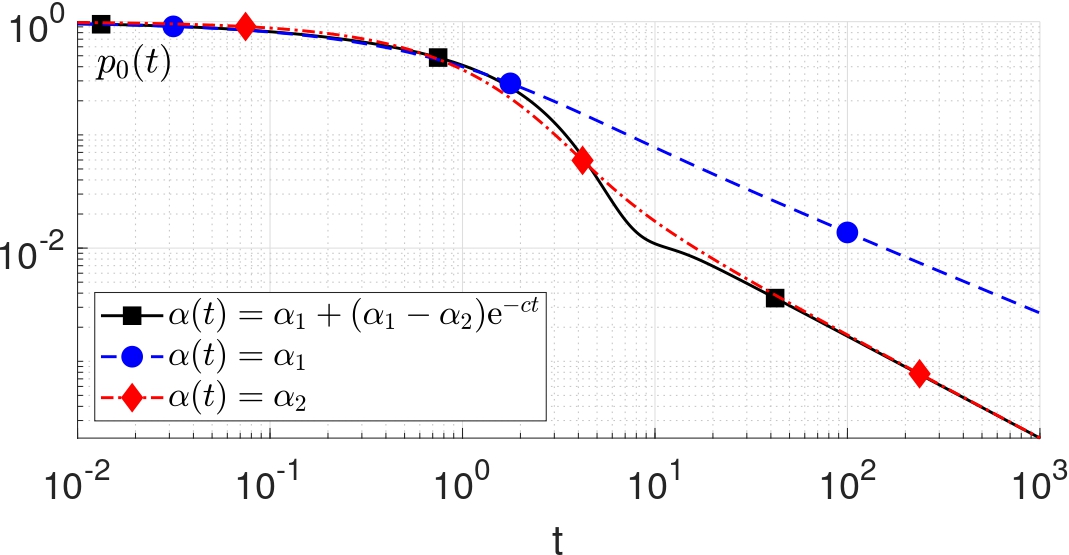}
&
\includegraphics[width=0.45\textwidth]{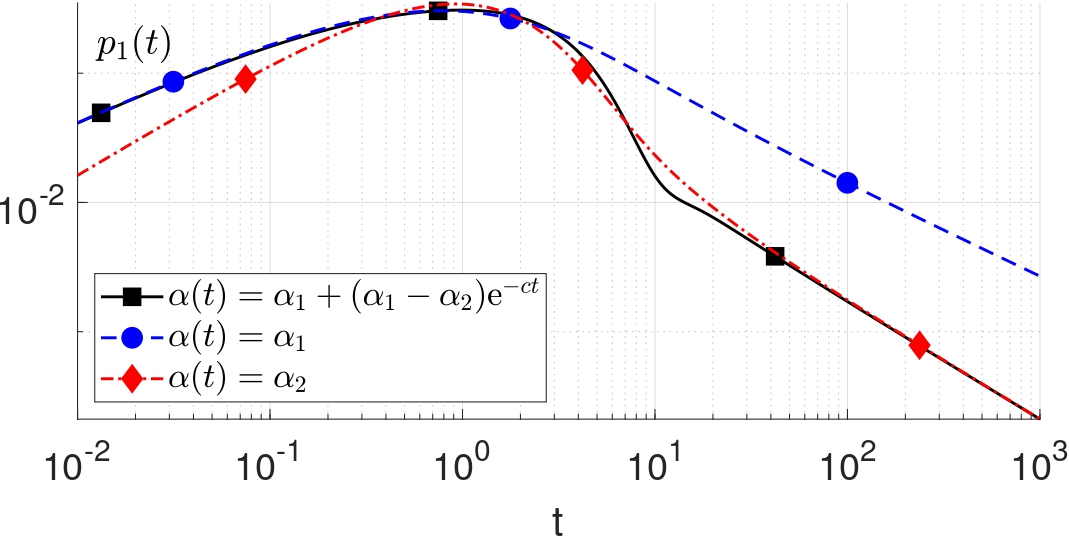}
\\
\includegraphics[width=0.45\textwidth]{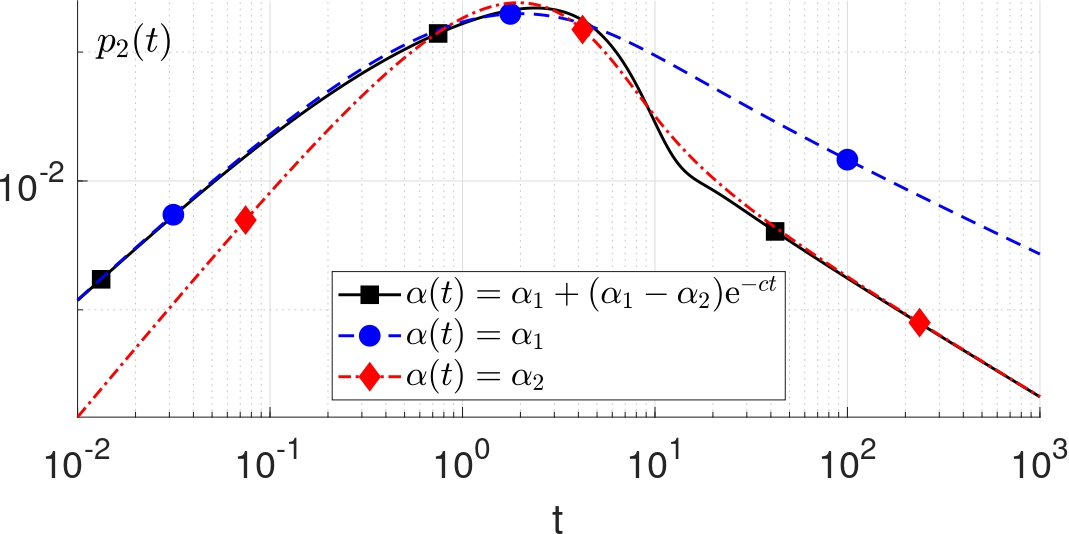}
&
\includegraphics[width=0.45\textwidth]{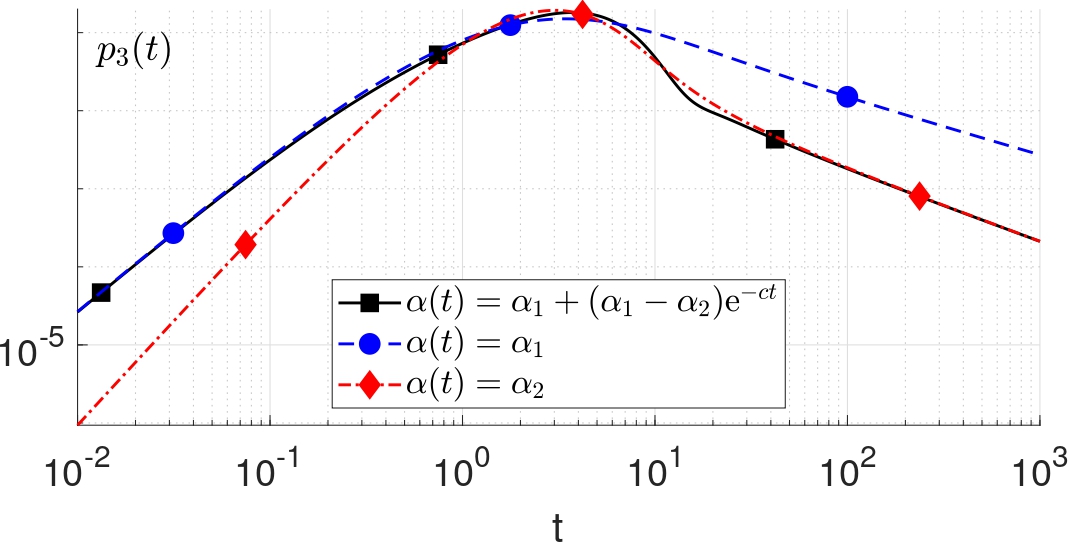}
\\
&
\end{tabular}%
\caption{Comparison of probability mass functions $p^{A}_k(t)$, $k=0,1,2,3$
between exponential variable-order $\protect\alpha (t)=\protect\alpha _{1}+(%
\protect\alpha _{1}-\protect\alpha _{2})\mathrm{e}^{-ct}$ and constant
orders $\protect\alpha_1$ and $\protect\alpha_2$ (here $\protect\alpha_1=0.7$%
, $\protect\alpha_2=0.9$ and $c=1.0$).}
\label{fig:Fig_PMF_Exp}
\end{figure}

On the other side, as one can observe from Figure \ref{fig:Fig_PMF_Peak},
with the variable-order transition (\ref{eq:TransitionPeak}), the behavior
is similar to the behavior of the probability mass functions of constant
order $\alpha_1$ both as $t\to 0^+$ and as $t \to \infty$, while the
behavior with the constant order $\alpha_2$ is replicated just on short
intervals at medium times.

\begin{figure}[htb]
\centering
\begin{tabular}{c@{\hspace{1.0cm}}c}
\includegraphics[width=0.45%
\textwidth]{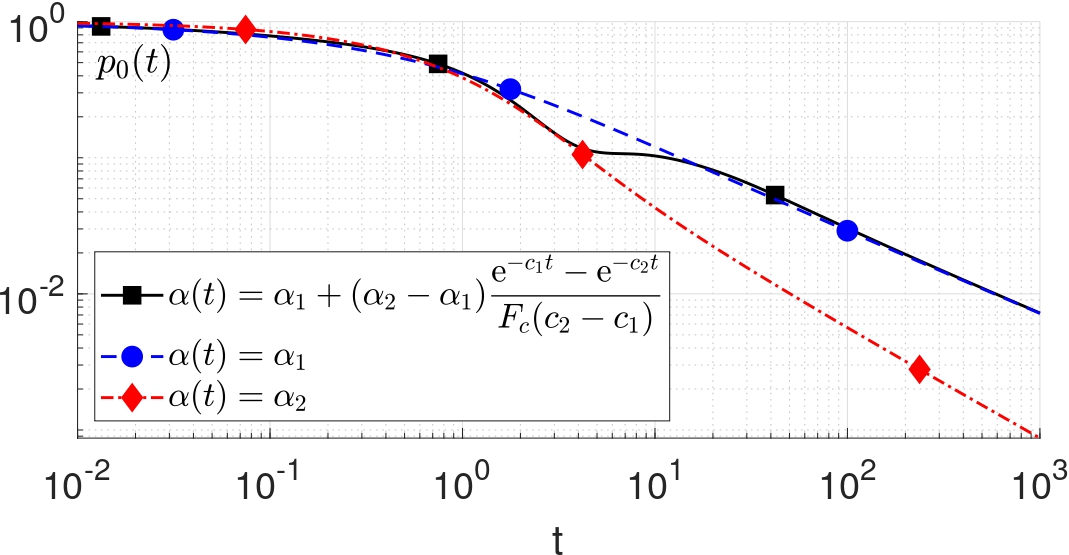} & %
\includegraphics[width=0.45%
\textwidth]{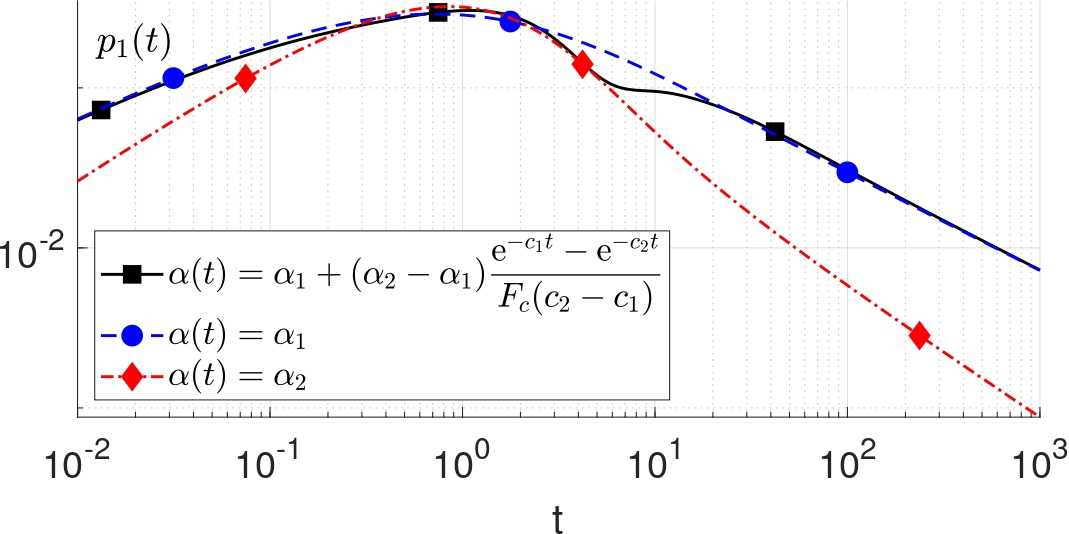} \\
\includegraphics[width=0.45%
\textwidth]{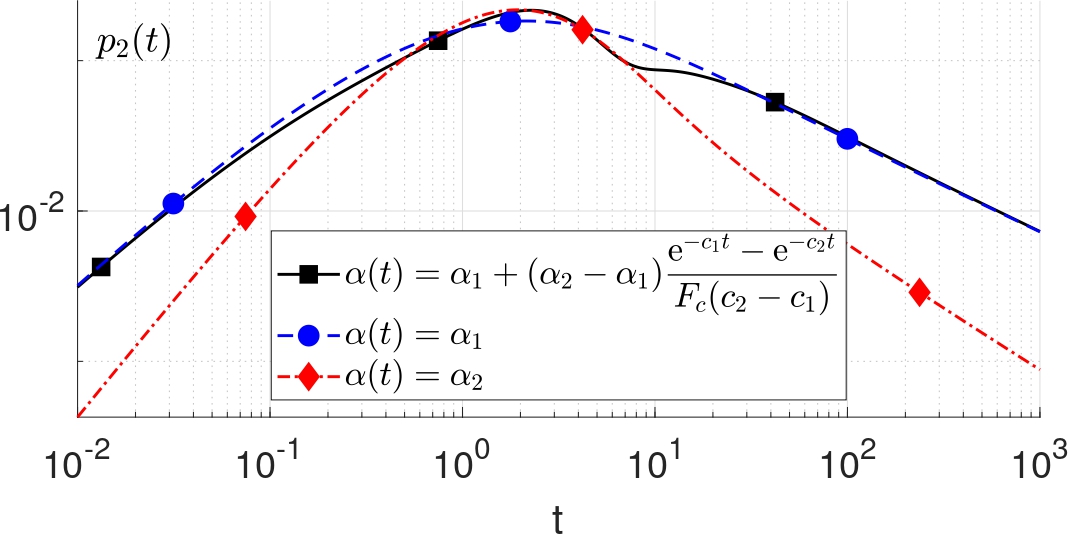} & %
\includegraphics[width=0.45%
\textwidth]{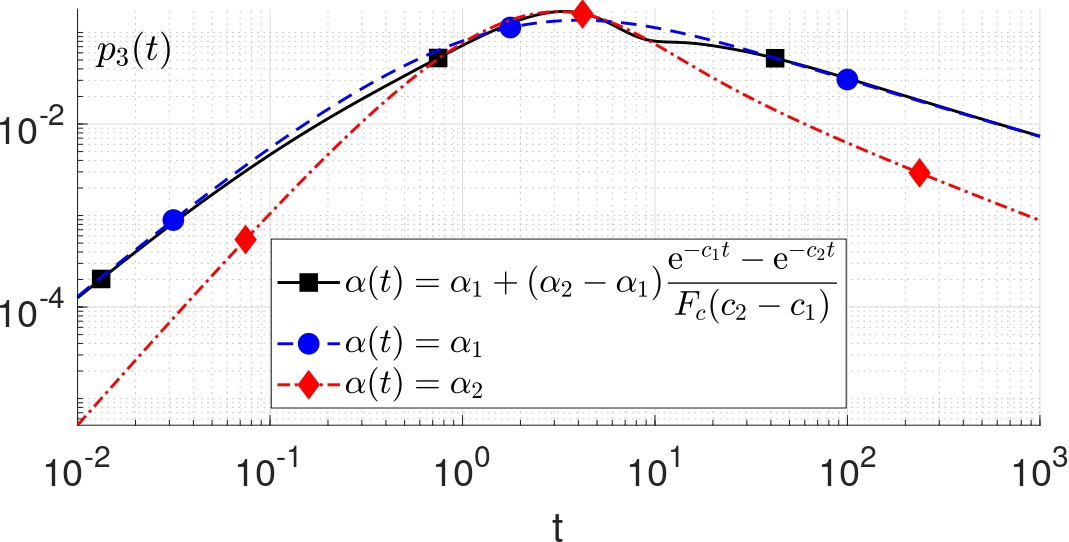} \\
&
\end{tabular}%
\caption{Comparison of probability mass functions $p^{A}_k(t)$, $k=0,1,2,3$
between exponential variable-order $\protect\alpha(t) = \protect\alpha_1 + (%
\protect\alpha_2-\protect\alpha_1) \frac{\mathrm{e}^{-c_1 t} - \mathrm{e}%
^{-c_2 t}}{F_c(c_2-c_1)}$ and constant order $\protect\alpha_1$ (here $%
\protect\alpha_1=0.6$, $\protect\alpha_2=0.8$, $c_1=0.2$ and $c_2=2.0$).}
\label{fig:Fig_PMF_Peak}
\end{figure}

We are now interested in the properties of the above defined process,
starting from its factorial moments and the moments of its interarrival
times.

\begin{theorem}
The $r$-th factorial moment of $N_{A}$, $r\in \mathbb{N}$, has
LT
\begin{equation}
\mathcal{L}\left\{ \mathbb{E}\left[ N_{A}(t)\cdot \cdot \cdot (N_{A}(t)-r+1)%
\right] ;s\right\} =\frac{r!\lambda ^{r}}{s^{rsA(s)+1}}.  \label{fac}
\end{equation}%
Moreover, the $r$-th moment of its interarrival time $Z_{A}$ is infinite for
any $r\in \mathbb{N}$.
\end{theorem}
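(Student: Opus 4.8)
The plan is to prove the two assertions separately, in both cases passing to the Laplace domain and using the explicit representations established above.

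For the factorial moment, I would start from
\[
\mathbb{E}\bigl[N_A(t)(N_A(t)-1)\cdots(N_A(t)-r+1)\bigr]=\sum_{k=r}^{\infty}\frac{k!}{(k-r)!}\,p_k^A(t),
\]
apply $\mathcal{L}\{\,\cdot\,;s\}$ and interchange it with the sum — legitimate by Tonelli's theorem, since every $p_k^A\ge 0$ — and then insert the explicit expression (\ref{pk}) for $\widetilde p_k^A(s)$. Writing $x:=s^{sA(s)}$ and reindexing with $j=k-r$, the series becomes
\[
\sum_{k\ge r}\frac{k!}{(k-r)!}\,\widetilde p_k^A(s)=\frac{x\,\lambda^{r}}{s(\lambda+x)^{r+1}}\sum_{j\ge 0}\frac{(j+r)!}{j!}\Bigl(\frac{\lambda}{\lambda+x}\Bigr)^{j}.
\]
The inner series is $r!$ times the negative binomial generating function $\sum_{j\ge0}\binom{j+r}{j}y^{j}=(1-y)^{-(r+1)}$ evaluated at $y=\lambda/(\lambda+x)\in(0,1)$, hence equals $r!\,(\lambda+x)^{r+1}/x^{r+1}$; substituting and cancelling leaves $r!\lambda^{r}/(s\,x^{r})=r!\lambda^{r}/s^{rsA(s)+1}$, which is (\ref{fac}). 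Finiteness of the right-hand side for each $s>0$ shows, a posteriori, that the factorial moment is finite for almost every $t$.

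For the second assertion, recall from Theorem \ref{thm1} that $u_A$ is non-negative and non-increasing with $u_A(0^{+})=1$, so $0\le u_A(t)\le 1$ for all $t>0$. Since $u_A(t)=P(Z_A>t)$, for every $r\in\mathbb{N}$ the layer-cake formula gives
\[
\mathbb{E}[Z_A^{r}]=r\int_0^{\infty}t^{r-1}u_A(t)\,dt\ \ge\ r\int_1^{\infty}u_A(t)\,dt\ \ge\ r\Bigl(\int_0^{\infty}u_A(t)\,dt-1\Bigr),
\]
so it suffices to prove that $\int_0^{\infty}u_A(t)\,dt=+\infty$. By monotone convergence this integral equals $\lim_{s\to0^{+}}\widetilde u_A(s)$, so I only need $\widetilde u_A(s)\to+\infty$ as $s\to0^{+}$. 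Now (\ref{con}) gives $sA(s)\to\alpha''\in(0,1)$ as $s\to0^{+}$, hence $sA(s)\le(1+\alpha'')/2<1$ for $s$ small; for such $s$ one has $x=s^{sA(s)}<1$, and from (\ref{uu})
\[
\widetilde u_A(s)=\frac{x}{s(\lambda+x)}\ \ge\ \frac{1}{\lambda+1}\,s^{sA(s)-1}\ \ge\ \frac{1}{\lambda+1}\,s^{(\alpha''-1)/2},
\]
which diverges to $+\infty$ as $s\to0^{+}$ because $(\alpha''-1)/2<0$. Therefore $\mathbb{E}[Z_A^{r}]=+\infty$ for every $r\in\mathbb{N}$.

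These computations are essentially routine; the only points requiring care are the termwise inversion and the resummation in the first part — both controlled by non-negativity together with the closed form of the negative binomial series — and, in the second part, estimating $s^{sA(s)}$ near $s=0$ despite $sA(s)$ being only asymptotically constant, which is why I bound $sA(s)$ strictly below $1$ for small $s$ rather than replacing it by its limit $\alpha''$. I do not anticipate any genuine obstacle beyond these bookkeeping issues.
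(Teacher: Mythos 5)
Your proof is correct and follows essentially the same route as the paper: for (\ref{fac}) the paper computes the probability generating function $\widetilde{G}_{N_A}(u;s)=s^{sA(s)-1}/(\lambda(1-u)+s^{sA(s)})$ and differentiates $r$ times at $u=1$, which is exactly your negative-binomial resummation of $\sum_k \frac{k!}{(k-r)!}\widetilde p_k^A(s)$ written in closed form (your Tonelli justification of the interchange is, if anything, slightly more careful). For the divergence of the moments of $Z_A$, both arguments reduce to $\lim_{s\to0^+}\widetilde u_A(s)=+\infty$ via (\ref{con}); the only cosmetic difference is that the paper first gets $\mathbb{E}Z_A=\infty$ and then invokes H\"older's inequality for $r\geq 2$, whereas you bound each $\mathbb{E}[Z_A^r]$ directly from below by $r\left(\int_0^\infty u_A(t)\,dt-1\right)$, which works equally well.
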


\begin{proof}
In order to prove formula (\ref{fac}) we derive the expression of
the probability generating function of $N_{A}$ (in the Laplace
domain), as follows, for $|u|<1$,

\begin{eqnarray}
\widetilde{G}_{N_{A}}(u;s) &:=& \mathcal{L}\left\{
G_{N_{A}}(u;t);s\right\} =\sum_{k=0}^{\infty
}u^{k}\widetilde{p}^{A}_{k}(s)
\label{gu} \\
&=&[\text{by (\ref{pk})}]  \notag \\
&=&\frac{s^{sA(s)-1}}{\lambda +s^{sA(s)}}\sum_{k=0}^{\infty }\frac{(u\lambda
)^{k}}{\left( \lambda +s^{sA(s)}\right) ^{k}}  \notag \\
&=&\frac{s^{sA(s)-1}}{\lambda (1-u)+s^{sA(s)}},  \notag
\end{eqnarray}%
Now, by taking the $r$-th order derivative of (\ref{gu}), for $%
u=1$, formula (\ref{fac}) easily follows.

As far as the moments of the interarrival times are concerned, we first
prove that the expected value is infinite: indeed we have that%
\begin{eqnarray*}
\mathbb{E}Z_{A} &=&\lim_{s\rightarrow 0^{+}}\int_{0}^{+\infty
}e^{-st}P(Z_{A}>t)dt \\
&=&\lim_{s\rightarrow 0^{+}}\frac{s^{sA(s)-1}}{\lambda +s^{sA(s)}}=+\infty ,
\end{eqnarray*}%
where the interchange between limit and integral is justified by the
monotone convergence theorem. The last step follows by applying the
conditions (\ref{uu2}), which imply (\ref{con}), and by considering that $%
\alpha',\alpha'' \in (0,1),$ so that $\lim_{s\rightarrow
0^{+}}s^{sA(s)-1}=+\infty $ and $\lim_{s\rightarrow 0^{+}}s^{sA(s)}=0$.
Finally, by applying the Holder's inequality to $Z_{A}$ and taking into
account that it is a non-negative random variable, we can conclude that the
moments are infinite for any $r=2,3,..$
\end{proof}

In order to evaluate the autocovariance of $N_{A}$ (at least in the
Laplace domain)$,$ we recall the following result by \cite{SUY}, which holds
for any renewal process $M(t):=\left\{ M(t)\right\} _{t\geq 0}$ with density function of the
interarrival times $f(\cdot )$:
\begin{equation}
\int_{0}^{+\infty }\int_{0}^{+\infty }e^{-s_{1}t_{1}-s_{2}t_{2}}\mathbb{E}%
M(t_{1})M(t_{2})dt_{1}dt_{2}=\frac{\left[ 1-\widetilde{f}(s_{1})\widetilde{f}%
(s_{2})\right] \widetilde{f}(s_{1}+s_{2})}{s_{1}s_{2}\left[ 1-\widetilde{f}%
(s_{1})\right] \left[ 1-\widetilde{f}(s_{2})\right] \left[ 1-\widetilde{f}%
(s_{1}+s_{2})\right] },  \label{su}
\end{equation}%
for $s_{1},s_{2}\geq 0.$ By considering (\ref{ff}), we immediately obtain
from (\ref{su}) that%
\begin{eqnarray}
&&\int_{0}^{+\infty }\int_{0}^{+\infty }e^{-s_{1}t_{1}-s_{2}t_{2}}Cov\left[
N_{A}(t_{1}),N_{A}(t_{2})\right] dt_{1}dt_{2}  \label{cov} \\
&=&\frac{\lambda ^{2}\left[
s_{1}^{s_{1}A(s_{1})}+s_{2}^{s_{2}A(s_{2})}-(s_{1}+s_{2})^{(s_{1}+s_{2})A((s_{1}+s_{2}))}%
\right] +\lambda s_{1}^{s_{1}A(s_{1})}s_{2}^{s_{2}A(s_{2})}}{%
s_{1}^{s_{1}A(s_{1})+1}s_{2}^{s_{2}A(s_{2})+1}(s_{1}+s_{2})^{(s_{1}+s_{2})A((s_{1}+s_{2}))}.%
}  \notag
\end{eqnarray}

It is possible to check that, in the fixed order case, i.e. for $%
sA(s)=\alpha ,$ formula (\ref{cov}) reduces to the LT of the
well-known autocovariance of the fractional Poisson process, which is equal
to:%
\begin{equation}
Cov\left[ N_{\alpha }(t_{1}),N_{\alpha }(t_{2})\right] =\frac{\lambda \left(
t_{1}\wedge t_{2}\right) ^{\alpha }}{\Gamma (1+\alpha )}+\frac{\lambda ^{2}}{%
\Gamma (1+\alpha )^{2}}\left[ \alpha \left( t_{1}\wedge t_{2}\right)
^{2\alpha }B(\alpha ,\alpha +1)+F(\alpha ;t_{1}\wedge t_{2};t_{1}\vee t_{2})%
\right] ,  \label{cov1}
\end{equation}%
where $B(\alpha ,\beta ):=\int_{0}^{1}x^{\alpha -1}(1-x)^{\beta -1}dx$ is
the Beta function, $\alpha ,\beta \geq 0$, $F(\alpha ;x;y):=\alpha
y^{2\alpha }B(\alpha ,\alpha +1;x/y)-x^{\alpha }y^{\alpha }$ and $B(\alpha
,\beta ;x):=\int_{0}^{x}y^{\alpha -1}(1-y)^{\beta -1}dy$ is the incomplete
Beta function, for $x\in (0,1]$, $\alpha ,\beta \geq 0$ (see \cite{LEO}). By
taking the double LT of (\ref{cov1}) we have that%
\begin{eqnarray*}
&&\int_{0}^{+\infty }\int_{0}^{+\infty }e^{-s_{1}t_{1}-s_{2}t_{2}}Cov\left[
N_{\alpha }(t_{1}),N_{\alpha }(t_{2})\right] dt_{1}dt_{2} \\
&=&\frac{\lambda }{\Gamma (1+\alpha )}\int_{0}^{+\infty }e^{-s_{2}t_{2}}%
\left[ \int_{0}^{t_{2}}e^{-s_{1}t_{1}}t_{1}^{\alpha
}dt_{1}+\int_{t_{2}}^{+\infty }e^{-s_{1}t_{1}}t_{1}^{\alpha }dt_{1}\right]
dt_{2}+ \\
&&+\frac{\lambda ^{2}}{\Gamma (1+2\alpha )}\int_{0}^{+\infty }e^{-s_{2}t_{2}}%
\left[ \int_{0}^{t_{2}}e^{-s_{1}t_{1}}t_{1}^{2\alpha }dt_{1}+t_{2}^{2\alpha
}\int_{t_{2}}^{+\infty }e^{-s_{1}t_{1}}dt_{1}\right] dt_{2}+ \\
&&+\frac{\lambda ^{2}\alpha }{\Gamma (1+\alpha )^{2}}\int_{0}^{+\infty
}e^{-s_{2}t_{2}}t_{2}^{2\alpha
}dt_{2}\int_{0}^{t_{2}}e^{-s_{1}t_{1}}dt_{1}\int_{0}^{t_{1}/t_{2}}z^{\alpha
-1}(1-z)^{\alpha }dz+ \\
&&+\frac{\lambda ^{2}\alpha }{\Gamma (1+\alpha )^{2}}\int_{0}^{+\infty
}e^{-s_{2}t_{2}}dt_{2}\int_{t_{2}}^{+\infty }e^{-s_{1}t_{1}}t_{1}^{2\alpha
}dt_{1}\int_{0}^{t_{2}/t_{1}}z^{\alpha -1}(1-z)^{\alpha }dz+ \\
&&-\frac{\lambda ^{2}}{\Gamma (1+\alpha )^{2}}\int_{0}^{+\infty
}e^{-s_{2}t_{2}}t_{2}^{\alpha }dt_{2}\int_{0}^{+\infty
}e^{-s_{1}t_{1}}t_{1}^{\alpha }dt_{1} \\
&=:&
I_{s_{1},s_{2}}^{I}+I_{s_{1},s_{2}}^{II}+I_{s_{1},s_{2}}^{III}+I_{s_{2},s_{1}}^{III}+I_{s_{1},s_{2}}^{IV}.
\end{eqnarray*}%
By some calculations we easily obtain the following results:%
\begin{eqnarray}
I_{s_{1},s_{2}}^{I} &=&\frac{\lambda }{s_{1}s_{2}(s_{1}+s_{2})^{\alpha }}
\label{1} \\
I_{s_{1},s_{2}}^{II} &=&\frac{\lambda ^{2}}{s_{1}s_{2}(s_{1}+s_{2})^{2\alpha
}}  \label{2} \\
I_{s_{1},s_{2}}^{IV} &=&\frac{\lambda ^{2}}{s_{1}^{1+\alpha
}+s_{2}^{1+\alpha }},  \label{3}
\end{eqnarray}%
while for the terms of the third type, we must take into account the
following formula (see (1.6.15) together with (1.6.14) and (1.9.3) in \cite%
{KIL}):%
\begin{equation*}
\int_{0}^{1}e^{zt}t^{a-1}(1-t)^{c-a-1}dt=\Gamma (c-a)E_{1,c}^{a}(z),
\end{equation*}%
for $0<\func{Re}(a)<\func{Re}(c)$, where $E_{\alpha ,\beta
}^{\gamma }\left( \cdot \right) $ is the Mittag-Leffler function
with three parameters (also
called Prabhakar function), for any $x\in \mathbb{C}$,%
\begin{equation*}
E_{\alpha ,\beta }^{\gamma }\left( x\right) :=\sum_{j=0}^{\infty }\frac{%
(\gamma )_{j}x^{j}}{j!\Gamma (\alpha j+\beta )},\qquad \alpha ,\beta ,\gamma
\in \mathbb{C}\text{, }\func{Re}(\alpha )>0,
\end{equation*}%
for $(\gamma )_{j}:=\Gamma (\gamma +j)/\Gamma (\gamma ).$ We also recall the
well-known formula (see \cite{KIL}, p.47)
\begin{equation}
\mathcal{L}\left\{ t^{\beta -1}E_{\alpha ,\beta }^{\gamma }(at^{\alpha
});s\right\} =\frac{s^{\alpha \gamma -\beta }}{\left( s^{\alpha }-a\right)
^{\gamma }},\qquad |as^{-\alpha }|<1.  \label{ml}
\end{equation}%
Thus we can write
\begin{eqnarray}
I_{s_{1},s_{2}}^{III} &=&\frac{\lambda ^{2}\alpha }{\Gamma (1+\alpha )^{2}}%
\int_{0}^{+\infty }e^{-s_{2}t_{2}}t_{2}^{2\alpha
}dt_{2}\int_{0}^{1}z^{\alpha -1}(1-z)^{\alpha
}dz\int_{zt_{2}}^{t_{2}}e^{-s_{1}t_{1}}dt_{1}  \label{4} \\
&=&\frac{\lambda ^{2}\alpha }{\Gamma (1+\alpha )^{2}}\frac{1}{s_{1}}%
\int_{0}^{+\infty }e^{-s_{2}t_{2}}t_{2}^{2\alpha
}dt_{2}\int_{0}^{1}z^{\alpha -1}(1-z)^{\alpha }\left[
e^{-s_{1}t_{2}z}-e^{-s_{1}t_{2}}\right] dz  \notag \\
&=&\frac{\lambda ^{2}}{s_{1}}\left[ \int_{0}^{+\infty
}e^{-s_{2}t_{2}}t_{2}^{2\alpha }E_{1,2\alpha +1}^{\alpha }\left(
-s_{1}t_{2}\right) dt_{2}-\frac{1}{(s_{1}+s_{2})^{2\alpha +1}}\right]  \notag
\\
&=&\frac{\lambda ^{2}}{s_{1}}\left[ \frac{1}{s_{2}^{\alpha
+1}(s_{1}+s_{2})^{\alpha }}-\frac{1}{(s_{1}+s_{2})^{2\alpha +1}}\right]
\notag
\end{eqnarray}%
and, analogously, for $I_{s_{2},s_{1}}^{III}.$ In view of (\ref%
{1}), (\ref{2}), (\ref{3}) and (\ref{4}), we obtain that%
\begin{equation*}
\int_{0}^{+\infty }\int_{0}^{+\infty }e^{-s_{1}t_{1}-s_{2}t_{2}}Cov\left[
N_{\alpha }(t_{1}),N_{\alpha }(t_{2})\right] dt_{1}dt_{2}=\frac{\lambda
s_{1}^{\alpha }s_{2}^{\alpha }+\lambda ^{2}\left[ s_{1}^{\alpha
}+s_{2}^{\alpha }-(s_{1}+s_{2})^{\alpha }\right] }{s_{1}^{\alpha
+1}s_{2}^{\alpha +1}(s_{1}+s_{2})^{\alpha }},
\end{equation*}%
which coincides with (\ref{cov}), when $sA(s)=\alpha .$

\section{The related continuous-time random walk and its limiting process}

\label{S:CT_RandomWalk}

Based on the previous results, we consider the continuous-time random walk
(hereafter CTRW) defined by means of the counting process $N_{A}$: let $%
X_{i},i=1,2,...$ be real, independent random variables with common density
function $f_{X}(\cdot )$ and let us denote $\widehat{g}(\kappa ):=\int_{%
\mathbb{R}}e^{i\kappa x}g(x)dx$, for $\kappa \in \mathbb{R}$ and for a
function $g:\mathbb{R}\rightarrow \mathbb{R}$, for which the integral
converges. We define, for any $t\geq 0,$ the CTRW with driving counting
process $N_{A}$ and jumps $X_{i}$ (under the assumption that $N_{A}$ and $%
X_{i}$ are independent each other) as
\begin{equation}
Y_{A}(t):=\sum_{i=1}^{N_{A}(t)}X_{i},  \label{ya}
\end{equation}%
and denote its density as $f_{Y_{A}}(y,t):=P(Y_{A}(t)\in dy)/dy.$ Then it is
well-known that the LT of the characteristic function of $%
Y_{A}(t)$ reads, for any $t\geq 0,$%
\begin{equation*}
\mathcal{L}\left\{ \widehat{f}_{Y_{A}}(\kappa ,t);s\right\} =\frac{1-%
\widetilde{f}_{Z_{A}}(s)}{s\left[ 1-\widetilde{f}_{Z_{A}}(s)\widehat{f}%
_{X}(\kappa )\right] },\qquad s\geq 0,\text{ }\kappa \in \mathbb{R},
\end{equation*}%
where $\widetilde{f}_{Z_{A}}(s)$ is the LT of the
interarrivals' density. By considering (\ref{ff}), we get%
\begin{equation}
\mathcal{L}\left\{ \widehat{f}_{Y_{A}}(\kappa ,t);s\right\} =\frac{%
s^{sA(s)-1}}{s^{sA(s)}+\lambda \lbrack 1-\widehat{f}_{X}(\kappa )]}.
\label{ya2}
\end{equation}%
We are now able to study the limiting behavior of the CTRW under an
appropriate rescaling. To this aim, we recall the definition of the
time-space fractional diffusion $Y_{\alpha ,\beta }^{\vartheta }(t),t\geq 0$
as the process whose density is the Green function of the following
equation, for $\alpha \in (0,1]$, $\beta \in (0,2],$ $|\vartheta |=\min
\{\beta ,2-\beta \},$%
\begin{equation}
^{C}D_{t}^{\alpha }u(x,t)=\mathcal{D}_{x}^{\beta ,\vartheta }u(x,t),\qquad
x\in \mathbb{R},\;t\geq 0,  \label{tsf}
\end{equation}%
where $\mathcal{D}_{x}^{\beta ,\vartheta }$ is the Riesz-Feller fractional
derivative with Fourier transform
\begin{equation*}
\widehat{\mathcal{D}_{x}^{\beta ,\vartheta }u}(\kappa )=-\psi _{\beta
,\vartheta }(\kappa )\widehat{u}(\kappa ),\qquad \kappa \in \mathbb{R},
\end{equation*}%
and $\psi _{\beta ,\vartheta }(\kappa ):=|\kappa |^{\beta }e^{i\,sign(\kappa
)\vartheta \pi /2}$ (see \cite{MAI2}, for details).

We also recall the definition of a stable random variable $\mathcal{S}%
_{\beta }$ with stability index $\beta \in (0,2]$ and symmetry parameter $%
|\vartheta |=\min \{\beta ,2-\beta \}$, which is defined by the following
characteristic function%
\begin{equation*}
\mathbb{E}e^{i\kappa \mathcal{S}_{\beta }}=e^{-\psi _{\beta ,\vartheta
}(\kappa )}=e^{-|\kappa |^{\beta }e^{i\,sign(\kappa )\vartheta \pi /2}}.
\end{equation*}%
We will consider hereafter $\mathcal{S}_{\beta }$ in the symmetric case,
i.e. we assume that $\vartheta =0.$

We recall that a (centered) random variable $X$ is said to be "in the domain
of attraction of $\mathcal{S}_{\beta }$" (and we write $X\in DoA(\mathcal{S}%
_{\beta })$), if the following convergence in law (by the extended central
limit theorem) holds for the rescaled sum of independent copies $X_{i},$ $%
i=1,2,...,$%
\begin{equation}
a_{n}\sum_{i=1}^{n}X_{i}\Longrightarrow \mathcal{S}_{\beta },  \label{cc}
\end{equation}%
where $\left\{ a_{n}\right\} _{n\geq 1}$ is a sequence such that $%
\lim_{n\rightarrow +\infty }a_{n}=0.$

\begin{theorem}
\label{thm2} Let $N_{A}^{(c)}(t)$, $t\geq 0,$ be the renewal process with
(rescaled) $k$-th renewal time $T_{k}^{A,c}:=c^{-1}\sum_{j=1}^{k}Z_{A,j}$,
where $Z_{A,j}$ are i.i.d. random variables with density (\ref{tk}), for $%
c>0 $ and let $X_{i}^{(c)}$ be i.i.d. centered r.v.'s with density $%
f_{X^{(c)}}$, (with scale parameter $1/c$), such that $\widehat{f}%
_{X^{(c)}}(\kappa/c )\simeq 1-(|\kappa |/c)^{\beta }$, for $c\rightarrow
+\infty.\ $Then the following convergence of the one-dimensional
distribution holds, as $c\rightarrow +\infty ,$%
\begin{equation}
c^{-\alpha''/\beta }\sum_{i=1}^{N_{A}^{(c)}(t)}X_{i}^{(c)}\Longrightarrow
Y_{\alpha'',\beta }(t),\qquad t>0,  \label{cc2}
\end{equation}%
where $Y_{\alpha'',\beta }(t)$ is the space-time fractional diffusion
process, whose transition density satisfies equation (\ref{tsf}), with time-derivative of order  $%
\alpha''=lim_{t \rightarrow +\infty}\alpha(t),$ $\beta \in (0,2]$
and $\vartheta =0.$
\end{theorem}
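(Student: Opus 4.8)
The plan is to prove the convergence of the one-dimensional distribution in (\ref{cc2}) by the L\'evy continuity theorem: it is enough to show that, for each fixed $t>0$ and $\kappa\in\mathbb{R}$, the characteristic function of $c^{-\alpha''/\beta}\sum_{i=1}^{N_A^{(c)}(t)}X_i^{(c)}$ converges, as $c\to+\infty$, to $\widehat{f}_{Y_{\alpha'',\beta}}(\kappa,t)$. The entire computation is carried out in the Laplace (in $t$)--Fourier (in $x$) domain, where it reduces to a limit of elementary functions built from $s^{sA(s)}$.

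First I would record the double transform of the rescaled CTRW. Conditioning on $N_A^{(c)}(t)$ and using the independence of the $X_i^{(c)}$, its characteristic function equals the probability generating function of $N_A^{(c)}(t)$ evaluated at $u=\widehat{f}_{X^{(c)}}(c^{-\alpha''/\beta}\kappa)$; since $N_A^{(c)}(t)=N_A(ct)$, the time-rescaling introduces a Jacobian factor $c^{-1}$, so that inserting (\ref{gu}) (equivalently, substituting $s\mapsto s/c$, $\kappa\mapsto c^{-\alpha''/\beta}\kappa$ and $\widehat{f}_X\mapsto\widehat{f}_{X^{(c)}}$ in (\ref{ya2})) gives
\begin{equation*}
\mathcal{L}\Bigl\{\widehat{f}_{c^{-\alpha''/\beta}Y_A^{(c)}}(\kappa,t);s\Bigr\}
=\frac{(s/c)^{\gamma_c}}{s\bigl[(s/c)^{\gamma_c}+\lambda\bigl(1-\widehat{f}_{X^{(c)}}(c^{-\alpha''/\beta}\kappa)\bigr)\bigr]},
\qquad \gamma_c:=\tfrac{s}{c}\,A\bigl(\tfrac{s}{c}\bigr).
\end{equation*}

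Next I would let $c\to+\infty$ for fixed $s>0$ and $\kappa$: multiplying numerator and denominator by $c^{\gamma_c}$ rewrites the right-hand side as $s^{\gamma_c-1}/\bigl[s^{\gamma_c}+\lambda\,c^{\gamma_c}\bigl(1-\widehat{f}_{X^{(c)}}(c^{-\alpha''/\beta}\kappa)\bigr)\bigr]$. By the final value theorem, $s/c\to0^+$ forces $\gamma_c\to\alpha''$ (see (\ref{con})), whence $s^{\gamma_c-1}\to s^{\alpha''-1}$ and $s^{\gamma_c}\to s^{\alpha''}$; by the stated asymptotics of $\widehat{f}_{X^{(c)}}$ near the origin, $1-\widehat{f}_{X^{(c)}}(c^{-\alpha''/\beta}\kappa)\simeq c^{-\alpha''}|\kappa|^\beta$, so that, provided $c^{\gamma_c-\alpha''}\to1$, the quantity $\lambda\,c^{\gamma_c}\bigl(1-\widehat{f}_{X^{(c)}}(c^{-\alpha''/\beta}\kappa)\bigr)$ tends to $|\kappa|^\beta$ (the factor $\lambda$ being absorbed into the normalization of $f_{X^{(c)}}$). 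Thus the double transform converges to $s^{\alpha''-1}/(s^{\alpha''}+|\kappa|^\beta)$, which -- applying the Fourier transform in $x$ and the Laplace transform in $t$ to (\ref{tsf}) with $\vartheta=0$ and initial datum $\delta(x)$ -- is exactly $\mathcal{L}\{\widehat{f}_{Y_{\alpha'',\beta}}(\kappa,\cdot);s\}$. Since the functions $t\mapsto\widehat{f}_{c^{-\alpha''/\beta}Y_A^{(c)}}(\kappa,t)$ are uniformly bounded by $1$ and converge, in Laplace transform and for every $s>0$, to the Laplace transform of the continuous function $t\mapsto\widehat{f}_{Y_{\alpha'',\beta}}(\kappa,t)$, the continuity theorem for Laplace transforms promotes this to pointwise convergence in $t$ (at every $t>0$ once a uniform equicontinuity estimate in $t$ is added, which follows from the $c^{-\alpha''/\beta}$-scaling of the jumps together with the Tauberian estimate $\mathbb{E}N_A(ct)\sim\lambda(ct)^{\alpha''}/\Gamma(1+\alpha'')$ read off from (\ref{fac})); the L\'evy continuity theorem then gives (\ref{cc2}).

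The step I expect to be the main obstacle is the passage to the limit in the variable-order factor, and specifically the control of $c^{\gamma_c-\alpha''}=\exp\{(\gamma_c-\alpha'')\log c\}$, an indeterminate form since $\gamma_c-\alpha''\to0$ while $\log c\to+\infty$. This forces one to supplement the hypotheses with quantitative information on the rate at which $sA(s)\to\alpha''$ as $s\to0^+$ -- equivalently, on how fast $\alpha(t)\to\alpha''$ as $t\to+\infty$ -- such as $sA(s)-\alpha''=O(s^\delta)$ for some $\delta>0$: this makes $(\gamma_c-\alpha'')\log c=O(c^{-\delta}\log c)\to0$, hence $c^{\gamma_c-\alpha''}\to1$, and it is satisfied by the exponential-transition models of Section \ref{S:VO_Renewal}, where in fact $sA(s)-\alpha''=O(s)$.
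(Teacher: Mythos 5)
Your proposal is correct and follows essentially the same route as the paper's proof: pass to the Fourier--Laplace domain, rescale $s\mapsto s/c$, and let $c\to+\infty$ using $\lim_{c\to+\infty}\frac{s}{c}A(s/c)=\alpha''$ together with the small-argument expansion of $\widehat{f}_{X^{(c)}}$, then invert the limiting transform $s^{\alpha''-1}/(s^{\alpha''}+\lambda|\kappa|^{\beta})$ via the Mittag--Leffler pair (\ref{ml}). Your one substantive addition is also well taken: the factor $c^{\frac{s}{c}A(s/c)-\alpha''}=\exp\{(\frac{s}{c}A(s/c)-\alpha'')\log c\}$ is an indeterminate form that (\ref{con}) alone does not control, the paper's proof passes over it silently, and a rate condition such as $sA(s)-\alpha''=O(s^{\delta})$ as $s\to 0^{+}$ (satisfied by both transition examples of Section \ref{S:VO_Renewal}) is indeed what is needed to conclude that this factor tends to $1$.
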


\begin{proof}
The characteristic function of (\ref{cc2}) can be written, for any $t\geq 0,$
as%
\begin{equation*}
\mathbb{E}e^{i\kappa c^{-\alpha''/\beta
}\sum_{i=1}^{N_{A}^{(c)}(t)}X_{i}^{(c)}}=\sum_{n=0}^{\infty }p_{n}^{A,c}(t)%
\left[ \widehat{f}_{X^{(c)}}(\kappa c^{-\alpha''/\beta })\right]
^{n},
\end{equation*}%
where $p_{n}^{A,c}(t):=P\left( N_{A}^{(c)}(t)=n\right) ,$ $t\geq 0,$ $%
n=0,1,....$ We note that%
\begin{eqnarray*}
p_{n}^{A,c}(t) &=&P(T_{n}^{A,c}<t)-P(T_{n+1}^{A,c}<t) \\
&=&P\left( \sum_{j=1}^{n}Z_{A,j}<ct\right) -P\left(
\sum_{j=1}^{n+1}Z_{A,j}<ct\right) =p_{n}^{A}(ct),
\end{eqnarray*}%
so that, by (\ref{pk}), we have
\begin{equation*}
\int_{0}^{+\infty }e^{-st}p_{n}^{A,c}(t)dt=\frac{1}{c}\frac{\lambda
^{n}(s/c)^{\frac{s}{c}A(s/c)-1}}{\left( \lambda +(s/c)^{\frac{s}{c}%
A(s/c)}\right) ^{n+1}}
\end{equation*}%
and%
\begin{eqnarray*}
\mathcal{L}\left\{ \mathbb{E}e^{i\kappa c^{-\alpha''/\beta
}\sum_{i=1}^{N_{A}^{(c)}(t)}X_{i}^{(c)}};s\right\} &=&\frac{1}{c}\frac{%
(s/c)^{\frac{s}{c}A(s/c)-1}}{(s/c)^{\frac{s}{c}A(s/c)}+\lambda \lbrack 1-%
\widehat{f}_{X}(\kappa c^{-\alpha''/\beta })]} \\
&=&\frac{s^{\frac{s}{c}A(s/c)-1}}{s^{\frac{s}{c}A(s/c)}+\lambda \lbrack 1-%
\widehat{f}_{X}(\kappa c^{-\alpha''/\beta })]c^{\frac{s}{c}A(s/c)}}.
\end{eqnarray*}%
We observe that $lim_{r\rightarrow 0^{+}}srA(sr)=\alpha''$ and thus $%
lim_{c\rightarrow +\infty }s^{\frac{s}{c}A(s/c)}=s^{\alpha''},$ by (\ref%
{con}). Moreover, by assumption, $\widehat{f}_{X}(\kappa c^{-\alpha'' /\beta })\simeq 1-c^{-\alpha''}|\kappa |^{\beta }$, for $c\rightarrow
+\infty .$ As a consequence, we have
\begin{equation*}
\lim_{c\rightarrow +\infty }\mathcal{L}\left\{ \mathbb{E}e^{i\kappa
c^{-\alpha'' /\beta }\sum_{i=1}^{N_{A}^{(c)}(t)}X_{i}^{(c)}};s\right\} =%
\frac{s^{\alpha''}-1}{s^{\alpha''}+\lambda |\kappa |^{\beta }}
\end{equation*}%
and, inverting the LT by means of (\ref{ml}), we can write%
\begin{equation}
\lim_{c\rightarrow +\infty }\mathbb{E}e^{i\kappa c^{-\alpha''/\beta
}\sum_{i=1}^{N_{A}^{(c)}(t)}X_{i}^{(c)}}=E_{\alpha''}(-\lambda t^{\alpha''}|\kappa |^{\beta }),  \label{em}
\end{equation}%
for any fixed $t\geq 0$. Formula (\ref{em}) coincides with the Fourier
transform of the Green function of (\ref{tsf}) (see \cite{MAI2}, for
details).
\end{proof}

The previous result reduces, in the fixed order case, to Theorem
IV.2 in \cite{SCAL}, if $\alpha(t)=\alpha''$, for any $t$; thus we
can conclude that, in the limit, the influence of the initial
parameter $\alpha'$ vanishes.

Let us now denote by $\overset{M_{1}}{\Longrightarrow }$ the convergence in
the $M_{1}$ topology in the Skorokhod space $D([0,T))$, for $T>0$ (see \cite%
{WHI} and \cite{MEE2} for details on the convergence in the $M_{1}$
topology).

\begin{theorem}
Let $Y_{A}^{(c)}(t):=\sum_{i=1}^{N_{A}(ct)}X_{i}^{(c)}$, then under the
assumptions of Theorem \ref{thm2}%
\begin{equation*}
\left\{ c^{-\alpha''/\beta }Y_{A}^{(c)}(t)\right\} _{t\geq 0}\overset{%
M_{1}}{\Longrightarrow }\left\{ Y_{\alpha'',\beta }(t)\right\} _{t\geq
0},\qquad c\rightarrow +\infty ,
\end{equation*}%
on $D([0,+\infty )).$
\end{theorem}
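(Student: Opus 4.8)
The plan is to upgrade the one-dimensional convergence of Theorem~\ref{thm2} to a functional limit theorem in the $M_1$ topology on $D([0,+\infty))$. The natural strategy is to decompose the rescaled CTRW into a composition of two independent pieces: the rescaled partial-sum process $S^{(c)}(u):=c^{-\alpha''/\beta}\sum_{i=1}^{\lfloor u\rfloor}X_i^{(c)}$ of the jumps, and the rescaled counting process $N_A(ct)$, so that $c^{-\alpha''/\beta}Y_A^{(c)}(t)=S^{(c)}\bigl(N_A(ct)\bigr)$. First I would establish the joint convergence of these two components (jointly, since they are independent, this is immediate once each marginal converges): by the extended central limit theorem and the assumption $\widehat{f}_{X^{(c)}}(\kappa/c)\simeq 1-(|\kappa|/c)^\beta$, the process $S^{(c)}$ converges in $D([0,+\infty))$ (with the $J_1$, hence $M_1$, topology) to $\{\mathcal{S}_\beta(u)\}_{u\geq0}$, a $\beta$-stable symmetric Lévy motion; and the rescaled counting process $c^{-\alpha''}N_A(ct)$ — driven by interarrival times with $\widetilde{f}_{Z_A}(s)=\lambda/(\lambda+s^{sA(s)})$ and $sA(s)\to\alpha''$ as $s\to0^+$ — converges in $D([0,+\infty))$ to the inverse $\alpha''$-stable subordinator $L_{\alpha''}(t)$. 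This last step is the crucial one: even though (as emphasized in the paper) $N_A$ itself has no exact subordinated representation, its \emph{rescaled limit} only sees the tail behavior of the interarrival distribution, i.e. the regularly-varying tail of index $\alpha''$ coming from $s^{sA(s)}\sim s^{\alpha''}$ near $s=0$, so the classical renewal-theory functional limit (as in \cite{MEE}, \cite{MEE2}) applies verbatim and yields $L_{\alpha''}$.

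Next I would invoke the continuous-mapping-type theorem for composition in the $M_1$ topology. The key technical point here, and the main obstacle, is that composition $(x,y)\mapsto x\circ y$ is \emph{not} continuous on all of $D\times D$ with the product $M_1$ topology, so one must verify the hypotheses under which it is continuous along the relevant sequences — essentially one needs the inner process (the time change) to have continuous, nondecreasing sample paths in the limit, and the outer process's discontinuities not to be created by the composition. Since $L_{\alpha''}$ is continuous and nondecreasing a.s., and $\mathcal{S}_\beta$ has only jump discontinuities occurring at a.s.\ continuity points of $L_{\alpha''}$, the relevant composition theorem (Whitt \cite{WHI}, Theorem 13.2.4, cf.\ the treatment in \cite{MEE2}) gives $S^{(c)}\circ(c^{-\alpha''}N_A(c\,\cdot))\overset{M_1}{\Longrightarrow}\mathcal{S}_\beta\circ L_{\alpha''}$. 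I would need to check that the deterministic rescaling constants match up: $c^{-\alpha''/\beta}Y_A^{(c)}(t)=S^{(c)}\bigl(c^{-\alpha''}\cdot c^{\alpha''}N_A(ct)\bigr)$ and $c^{\alpha''}N_A(ct)$ has the same limit as $c^{-\alpha''}$ times... — more precisely one lines up the normalization so that the counting process, after scaling time by $c$ and space by the appropriate power, converges to $L_{\alpha''}$, which is consistent with the Laplace-transform computation already performed in the proof of Theorem~\ref{thm2}.

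Finally, I would identify the limit process. The subordinated process $\mathcal{S}_\beta(L_{\alpha''}(t))$ — a symmetric $\beta$-stable Lévy motion time-changed by an independent inverse $\alpha''$-stable subordinator — is precisely the space-time fractional diffusion $Y_{\alpha'',\beta}(t)$ whose one-dimensional density is the Green function of \eqref{tsf} with $\vartheta=0$; this identification is classical (see \cite{MAI2}, and it is consistent with the Fourier-Laplace characterization $s^{\alpha''-1}/(s^{\alpha''}+\lambda|\kappa|^\beta)$ and the Mittag-Leffler formula \eqref{em} derived in Theorem~\ref{thm2}). Since the finite-dimensional limit is already forced to coincide with the marginal limit obtained in Theorem~\ref{thm2}, the $M_1$-functional limit has the claimed law. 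I expect the genuinely delicate point to be the verification of the $M_1$-continuity hypotheses for composition at the boundary (ensuring no pathological behavior where $L_{\alpha''}$ is flat and $\mathcal{S}_\beta$ jumps), which is handled by the standard argument that almost surely the jump times of $\mathcal{S}_\beta$ avoid the (Lebesgue-null, a.s.) set of flat spots of $L_{\alpha''}$, together with tightness in $M_1$ which follows from the tightness of the two independent components.
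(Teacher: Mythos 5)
Your proposal is correct and follows essentially the same route as the paper: establish that the jumps lie in the domain of attraction of $\mathcal{S}_\beta$, show that the rescaled renewal/counting process converges to the inverse $\alpha''$-stable subordinator via the Laplace asymptotics $s^{sA(s)}\sim s^{\alpha''}$ as $s\to 0^+$, and then pass the joint $J_1$ convergence of the two independent components through a composition theorem to get the $M_1$ limit $\mathcal{S}_\beta(\mathcal{L}_{\alpha''}(t))\overset{d}{=}Y_{\alpha'',\beta}(t)$. The only cosmetic difference is that the paper delegates the composition step to Theorem~2.1 of Meerschaert--Scheffler rather than invoking Whitt's composition theorem directly.
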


\begin{proof}
We start by proving that, for the r.v.'s $X_{i}^{(c)},$ the convergence in (%
\ref{cc}) holds for $a_{c}=c^{-1/\beta }$ since%
\begin{equation}
\mathbb{E}e^{i\kappa c^{-1/\beta }\sum_{i=1}^{c}X_{i}^{(c)}}=\left( \widehat{%
f}_{X^{(c)}}\left( \kappa c^{-1/\beta }\right) \right) ^{c}\simeq \left(
1-|\kappa |^{\beta }c^{-1}\right) ^{c},\qquad c\rightarrow +\infty .
\label{el}
\end{equation}%
Thus $X^{(c)}\in DoA(\mathcal{S}_{\beta })$, $c\rightarrow +\infty
.$ Under the assumptions on $\alpha (\cdot )$ and $A(\cdot )$
given in Theorem \ref{thm1}, we can easily see that
$T_{\left\lfloor ct\right\rfloor }^{A}:=\sum_{j=1}^{\left\lfloor
ct\right\rfloor }Z_{A,j}$ behaves asymptotically, for
$c\rightarrow +\infty ,$ as in the special case (of the
fractional Poisson process) where $Z_{A}$ is distributed as $\mathcal{A}%
_{\alpha }(Z),$ where $\mathcal{A}_{\alpha }(t),$ $t\geq 0,$ is an
 $\alpha $%
-stable subordinator (with $\alpha =\alpha'')$ and $Z$ is an
independent,
exponential r.v. with parameter $\lambda .$ Indeed, since, by (\ref{con}), $%
\lim_{s\rightarrow 0^{+}}sA(s)=\alpha'',$ we can derive that
\begin{equation*}
\mathcal{L}\left\{ P\left( Z_{A}>t\right) ;s\right\} \sim \frac{s^{\alpha''-1}}{s^{\alpha''}+\lambda },\qquad s\rightarrow 0^{+},
\end{equation*}%
by considering (\ref{uu}). Thus the following convergence holds $%
\{T_{\left\lfloor ct\right\rfloor }^{A}\}_{t\geq 0}\overset{J_{1}}{%
\Rightarrow }\{\mathcal{A}_{\alpha''}(t)\}_{t\geq 0},$ as $c\rightarrow
+\infty ,$ in $D([0,+\infty ))$ (see \cite{MEE2}, p.100).

By the independence of $Z_{A,j}$ and $X_{j}^{(c)},$ for any $j=1,2...$ and
by the functional central limit theorem, we have that%
\begin{equation*}
\left\{ c^{-1/\beta }\sum_{j=1}^{[ct]}X_{j}^{(c)},c^{-\alpha''}N_{A}(ct)\right\} _{t\geq 0}\overset{J_{1}}{\Longrightarrow }\left\{
\mathcal{S}_{\beta }(t),\mathcal{L}_{\alpha''}(t)\right\} _{t\geq
0},\qquad c\rightarrow +\infty ,
\end{equation*}%
in the $J_{1}$ topology on $D([0,+\infty )).$ Therefore, by the above
mentioned Theorem 2.1 in \cite{meer1}, the following convergence holds%
\begin{equation*}
\left\{ c^{-\alpha''/\beta }Y_{A}^{(c)}(t)\right\} _{t\geq 0}\overset{%
M_{1}}{\Longrightarrow }\left\{ \mathcal{S}_{\beta }(\mathcal{L}_{\alpha''}(t))\right\} _{t\geq 0},\qquad c\rightarrow +\infty ,
\end{equation*}%
which gives the desired result, by considering the well-known equality in
distribution $\mathcal{S}_{\beta }(\mathcal{L}_{\alpha''}(t))\overset{d}{=%
}Y_{\alpha'',\beta }(t)$ (see \cite{MAI2}).
\end{proof}

\begin{remark}
As a special case of the previous result, when $\beta =2$ and $\lambda =1/2$%
, we obtain the convergence of the process $Y_{A}^{(c)},$ for $c\rightarrow
\infty,$ to the so-called generalized\ grey Brownian motion $\mathcal{B}%
_{\alpha }(t),t\geq 0,$ (with $\alpha =\alpha''),$ which can be defined
by means of its characteristic function $\mathbb{E}e^{i\kappa \mathcal{B}%
_{\alpha }(t)}=E_{\alpha }(- t^{\alpha }\kappa ^{2}/2)$ (see \cite{MUR} and
\cite{MUR2}).
\end{remark}

\end{document}